%% file: main.tex
\input{En_tete.tex}

\title{A finite-graph conjecture related to $\theta(p_c)=0$ for
Bernoulli bond percolation on $\mathbb Z^d$}
\author{Lucas Flammant\thanks{
Email: \texttt{lucasflammant@aol.com}. 
Alternative email: \texttt{lucasflammant6@gmail.com}.
}}
\date{\today}

\begin{document}

\maketitle

\input{Debut.tex}

\input{Preuve_planaire.tex}

\input{Reflexions_conjecture.tex}

\input{Appendice.tex}

\input{Acknowledgements.tex}

\bibliographystyle{plain}
\bibliography{biblio.bib}

\end{document}

%% file: En_tete.tex
\documentclass[11pt,a4paper]{scrartcl}

\usepackage[T1]{fontenc}
\usepackage[utf8]{inputenc} % Facultatif avec un LaTeX récent
\usepackage{microtype}

\usepackage{amsmath}
\usepackage{amssymb}
\usepackage{amsthm}
\usepackage{stmaryrd}

\usepackage{graphicx}
\usepackage{xcolor}
\usepackage{float}
\usepackage{needspace}
\usepackage{placeins}

\usepackage{tikz}
\usepackage{tikz-qtree}
\usetikzlibrary{
  fit,
  calc,
  positioning,
  decorations.pathreplacing,
  matrix,
  trees,
  patterns
}

\usepackage{geometry}
\usepackage[numbers]{natbib}
\usepackage{hyperref}
\hypersetup{hidelinks}

\numberwithin{equation}{section}

\theoremstyle{plain}
\newtheorem{theorem}{Theorem}[section]

\newtheorem{lemma}[theorem]{Lemma}

\newtheorem{proposition}[theorem]{Proposition}
\newtheorem{conj}[theorem]{Conjecture}

\theoremstyle{definition}
\newtheorem{definition}[theorem]{Definition}

\newtheorem{notation}[theorem]{Notation}

\theoremstyle{remark}

\newtheorem{remark}[theorem]{Remark}

\newcommand{\Z}{\mathbb{Z}}
\newcommand{\N}{\mathbb{N}}
\newcommand{\R}{\mathbb{R}}
\newcommand{\ori}{o}
\newcommand{\bo}{B}
\newcommand{\eps}{\varepsilon}
\newcommand{\vph}{\varphi}
\newcommand{\pr}{\mathbb{P}}
\newcommand{\ex}{\mathbb{E}}
\newcommand{\conn}{\longleftrightarrow}

\newcommand{\ind}{\mathbf{1}}

\newcommand{\blast}{\mathbf{b^*}}
\newcommand{\bfirst}{\mathbf{b}_\mathrm{first}}

\makeindex

%% file: Debut.tex
\begin{abstract}
We introduce a conjecture for Bernoulli bond percolation on finite graphs.
Roughly speaking, it asserts that if each boundary vertex is associated
with a highly probable event that is increasing with respect to the
percolation configuration, then, conditionally on the origin being
connected to the boundary, the origin is likely to be connected to a
boundary vertex whose associated event occurs. The conjecture implies
a high-probability connectivity statement which is known to imply
$\theta(p_c)=0$ for Bernoulli bond percolation on $\mathbb Z^d$, for
every $d\geq2$. We prove the conjecture for finite planar graphs when
the origin and the boundary vertices lie on the outer-face boundary,
with the explicit bound $1-2\sqrt{\varepsilon}$. The proof combines a
left-first depth-first exploration with a partial FKG inequality adapted
to monotonicity up to a stopping time. A counterexample shows that the
connectivity structure is essential: the analogous statement fails when
the connectivity events are replaced by arbitrary increasing events.
\end{abstract}

\medskip

\noindent\textbf{2020 Mathematics Subject Classification.}
Primary 60K35; Secondary 82B43.

\medskip

\noindent\textbf{Keywords.}
Bernoulli bond percolation, critical percolation, planar graphs,
FKG inequality, depth-first search.

\section{Introduction}

Bernoulli bond percolation is one of the central models of probability
theory and statistical physics. Given a graph $G=(V,E)$ and a parameter
$p\in[0,1]$, each edge is declared open independently with probability
$p$ and closed otherwise. For background on percolation, we refer
to~\cite{GrimmettPercolation}.

This paper introduces a new conjecture concerning finite percolation
models and establishes it in a planar setting. The conjecture is
formulated in terms of a distinguished vertex, called the origin, and
a distinguished subset of vertices, referred to as the boundary.
Roughly speaking, it asserts that if one associates to every boundary
vertex a highly probable event that is increasing with respect to the
percolation configuration, then the open cluster of the origin is very
unlikely to avoid all boundary vertices whose associated event occurs.

The motivation for this conjecture comes from critical percolation.
The conjecture has the following consequence: it implies
$\theta(p_c)=0$ for Bernoulli bond percolation on $\mathbb Z^d$,
that is, critical Bernoulli bond percolation on $\mathbb Z^d$ has no
infinite cluster almost surely, for every $d\geq2$.

After the first version of this paper was completed, we became aware
of the related work of Kozma and Nitzan~\cite{kozma2024reductionthetapc0}.
They formulated a high-probability connectivity conjecture for percolation
on graphs and proved that it implies $\theta(p_c)=0$ on $\mathbb Z^d$.
Our conjecture implies the finite constant-parameter version of their
Conjecture~3, from which the general version follows by standard
approximation arguments. Their Theorem~6 then yields $\theta(p_c)=0$
on $\mathbb Z^d$; see Theorem~\ref{thm_theta_pc}. We nevertheless
intend to include our own proof in a subsequent version. Although the
two arguments are close at the conceptual level, their implementations
differ in several respects, and including our argument will also make
the paper self-contained.

While the general conjecture remains open, we prove it here for finite
planar graphs under a geometric assumption on the origin and the
boundary, with the explicit bound $1-2\sqrt{\varepsilon}$. The proof
combines a left-first depth-first exploration with geometric properties
specific to the planar setting and a partial version of the classical
FKG inequality~\cite{Harris1960,FKG1971}, adapted to monotonicity up to
a stopping time.

The specific form of the connectivity events is essential. Indeed, we
give a counterexample showing that the analogous statement fails if
these events are replaced by arbitrary increasing events.

The remainder of the paper is organized as follows. Section~2 introduces
the framework, states the general conjecture and the planar theorem, and
derives its implication for critical percolation on $\mathbb Z^d$.
Section~3 is devoted to the proof of Theorem~\ref{planar_thm}.
Section~4 contains further remarks on the general conjecture and
discusses possible extensions of the planar argument. The appendix
contains the proof of the partial FKG proposition.

\paragraph{Use of artificial intelligence.}
Artificial intelligence tools were used during the preparation of this
paper as an aid for mathematical discussion, for checking some arguments,
and for improving the exposition and wording of the manuscript. However, it was not used to generate the mathematical content: the mathematical content of the paper, including the statements, proofs, and mathematical developments, is the author's own work.

\section{General framework, conjecture and main results}

Throughout the paper, let $G=(V,E)$ be a finite undirected loopless
multigraph. Parallel edges are allowed and are regarded as distinct
elements of $E$. We distinguish a vertex $\ori\in V$, called the \emph{origin}, and a subset of vertices $\bo \subseteq V$, called the \emph{boundary}. Let $p \in (0,1)$ be a fixed parameter.

Let $(\Omega,\mathcal F,\mathbb P)$ be a probability space. We consider a random percolation configuration
\begin{equation*}
\pi=(\pi_e)_{e\in E}\in\{0,1\}^E,
\end{equation*}
where the random variables $(\pi_e)_{e\in E}$ are independent Bernoulli random variables with parameter $p$. Thus, for each edge $e\in E$, the edge $e$ is open when $\pi_e=1$ and closed when $\pi_e=0$. A deterministic percolation configuration will be denoted by $\omega\in\{0,1\}^E$. The probability space may contain additional randomness besides the percolation configuration $\pi$, as the events appearing below are not assumed to be measurable with respect to $\pi$. 

Whenever $X$ is a random variable measurable with respect to
$\sigma(\pi)$, we regard $X$ as a function of the percolation
configuration and write $X(\omega)$ for its value when $\pi=\omega$.

The degenerate cases $p\in\{0,1\}$ are excluded only to avoid immaterial conventions concerning conditional probabilities. 

The exclusion of loops is only a matter of convenience. Since loops do not affect connectivity, their states may be incorporated into the additional randomness of the underlying probability space.

For vertices $u,v\in V$, we write $\{u\conn v\}$ for the event that
$u$ and $v$ are connected by an open path in the random configuration
$\pi$. We define the event
\begin{equation*}
\{\ori\conn\bo\}:=\bigcup_{b\in\bo}\{\ori\conn b\}.
\end{equation*}

\begin{definition}
Let $A$ be an event of the underlying probability space. We say that $A$ is \emph{increasing with respect to the percolation configuration} if the function $\omega \mapsto \pr[A\mid \pi=\omega]$ is increasing on $\{0,1\}^E$.
\end{definition}
This notion extends the usual notion of an increasing event. Indeed, if $A$ is measurable with respect to $\sigma(\pi)$, then $\pr[A\mid \pi=\omega]=\mathbf 1_A(\omega)$ for every configuration $\omega$, and the two definitions coincide.

We may now state the main conjecture.

\begin{conj}[Main conjecture]
\label{main_conj}
There exists a universal function $\vph:(0,+\infty)\to\R,$ independent of $G$, $\ori$, $\bo$ and $p$, such that $\vph(\eps)\to1$ as $\eps\to0$, and such that the following statement always holds.

Let $\eps>0$. Let $(E_b)_{b\in\bo}$ be a family of events which are increasing with respect to the percolation configuration. Assume that
$\pr[E_b]\ge 1-\eps$ for every $b \in \bo$. Then
\begin{equation*}
\pr\Biggl[
\bigcup_{b\in\bo}
\Bigl(\{\ori\conn b\}\cap E_b\Bigr)
\Biggr]
\ge
\vph(\eps)\,
\pr[\ori\conn\bo].
\end{equation*}
\end{conj}
Whenever $\pr[\ori\conn\bo]>0$, the conclusion may equivalently be written as
\begin{equation}
\label{mainthmeq2}
\pr\left[
\bigcup_{b\in\bo}
\Bigl(\{\ori\conn b\}\cap E_b\Bigr)
\middle|
\ori\conn\bo
\right]
\ge
\vph(\eps).
\end{equation}

The conjecture may be interpreted as follows. Suppose that each boundary vertex $b$ is equipped with an event $E_b$ which is both highly probable and increasing with respect to the percolation configuration. Then, conditioned on the existence of a connection from the origin to the boundary, the open cluster of the origin intersects, with high probability, the collection of boundary vertices whose associated event occurs.

We shall use the following terminology throughout the paper. We say that a boundary vertex $b\in\bo$ is a \emph{connection} if $\ori\conn b$. A connection $b\in\bo$ will be called a \emph{success} if $E_b$ occurs. In this terminology, the conjecture states that whenever the events $(E_b)_{b\in\bo}$ are sufficiently likely, the existence of a connection almost guarantees the existence of a success.

We now state the main results.
\begin{theorem}[Planar case]
\label{planar_thm}
Assume that $G$ is equipped with a fixed planar embedding and that
$\{\ori\}\cup \bo$ is contained in the boundary of the outer face. Then Conjecture~\ref{main_conj} holds with $\vph(\eps)=1-2\sqrt{\eps}.$
\end{theorem}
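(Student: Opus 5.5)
We plan to order the boundary vertices along the outer face and explore the open cluster of $\ori$ with a left-first depth-first search, so that we find connections one at a time. The geometric input is that the region swept by the exploration is separated from the rest of the graph. Since $\ori$ and $\bo$ lie on the outer face boundary, walk around the outer face starting from $\ori$; this induces a linear order $b_1,\dots,b_n$ on $\bo$. The left-first exploration from $\ori$ reveals the leftmost open path to the boundary, and it stops at the first connection $\bfirst$ it reaches. Planarity then gives the following property. When the exploration stops at a connection $b$, the set of revealed edges consists of the open path $\gamma$ from $\ori$ to $b$ together with closed edges lying to its left. The unrevealed part of the graph lies to the right of $\gamma$, and any further connection to a boundary vertex further along in the order must pass through the unrevealed edges, using $\gamma$ as its starting point.

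The core step is as follows. Define stopping times $\tau_1<\tau_2<\dots$ for the successive connections $b^{(1)},b^{(2)},\dots$ found by continuing the depth-first search to the right after each connection. We then want to compare $\pr[E_{b^{(k)}}^c \mid \mathcal F_{\tau_k}]$ with $\pr[E_{b^{(k)}}^c]\le\eps$. Conditioning on $\mathcal F_{\tau_k}$ imposes open edges on $\gamma$, which favours $E_b$, but it also imposes closed edges to the left, which disfavours it. This is why we need the partial FKG inequality announced in the introduction, applied to information that is monotone only up to a stopping time. The mechanism we expect is this: after conditioning on everything the exploration has revealed, the conditional law remains increasing in the unrevealed edges. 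The closed edges, however, can only hurt. We therefore aim for an averaged bound in which negative correlation is controlled by summing over $k$ and using a $\sqrt{\eps}$ threshold argument.

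We split into two cases according to a threshold. Let $N$ be the number of connections, let $F$ be the bad event (a connection exists but no connection is a success), and let $X=\pr[E^c_{\bfirst}\mid\mathcal F_{\tau_1}]$. By Markov's inequality, $\pr[X>\sqrt\eps,\ \ori\conn\bo]\le\sqrt\eps\,\pr[\ori\conn\bo]$, provided we can show $\ex[X\ind_{\ori\conn\bo}]\le\eps\,\pr[\ori\conn\bo]$. To get that bound we decompose $\pr[E_b^c]\ge\sum$ over histories of the exploration stopping at $b$, and use partial FKG to compare each history with a term involving the connection probability. The other case is the histories where $X\le\sqrt\eps$. If the first connection fails, we continue the exploration, and a second application of the same bound should give another factor, so that the total failure probability is at most $2\sqrt\eps$.

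The main obstacle is the $\eps$-bound on $\ex[X\ind]$. Conditioning on the exploration having stopped at $b$ with a given leftmost path fixes closed edges, and this decreases $\pr[E_b]$. Controlling this effect requires the planar fact that the closed edges revealed form a dual path separating $\gamma$ from everything to its left. My first attempt would be to couple: we resample the closed edges of the dual path and show that doing so can only produce a connection at $b$ or at a vertex to the left of $b$. A union bound over the order then turns $\sum_b\pr[E_b^c\cap\{\bfirst=b\}]$ into at most $\eps$ times the connection probability, up to a square-root loss.
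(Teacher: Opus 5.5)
\textbf{There is a genuine gap.} It lies exactly at the step you flag as the main obstacle. Because $\{\bfirst=b\}\in\mathcal F_{\tau_b}$ (with your $\bfirst$ the first connection discovered), your quantity is
\[
\ex\bigl[X\ind_{\{\ori\conn\bo\}}\bigr]=\sum_{b\in\bo}\pr\bigl[\{\bfirst=b\}\cap E_b^c\bigr]=\pr\bigl[\ori\conn\bo,\ \bfirst\text{ is not a success}\bigr].
\]
So the bound you aim for would by itself give $\vph(\eps)=1-\eps$, with no Markov step and no second round.

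That bound is false. Take the star with centre $\ori$ and leaves $b_1,\dots,b_n$ in boundary order. The left-first DFS tries $e_k=\ori b_k$ in order, so $\bfirst=b_k$ if and only if $e_1,\dots,e_{k-1}$ are closed and $e_k$ is open. Put $D_k=\{e_1,\dots,e_{k-1}\text{ closed}\}$ and $c_k=\min\{1,\eps(1-p)^{-(k-1)}\}$. With independent uniforms $U_k$, set $E_{b_k}^c=D_k\cap\{U_k\le c_k\}$. Each $E_{b_k}$ is increasing with respect to the configuration, since $\pr[E_{b_k}\mid\pi=\omega]=1-c_k\ind_{D_k}(\omega)$, and $\pr[E_{b_k}]\ge1-\eps$. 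Yet $\pr[\bfirst=b_k,\,E_{b_k}^c]=p\min\{(1-p)^{k-1},\eps\}$. The sum of these tends to $\eps(1+\log(1/\eps))>\eps$ as $p\to0$ and $n\to\infty$, while $\pr[\ori\conn\bo]\to1$.

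The structural reason is that $\{\bfirst=b\}$ is \emph{decreasing} in the closed edges revealed before $\tau_b$: opening one of them can create an earlier connection. So $\ind_{\{\bfirst=b\}}$ is not increasing up to $\tau_b$, and the partial FKG inequality points the wrong way. No coupling that resamples the dual path can rescue an inequality that fails. Your ``second application of the same bound'' after a failed first connection has the same defect, and iterating it would not close.

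\textbf{The missing idea is the choice of witness.} The paper uses the rightmost connection $\blast$, not the first one. Planarity gives two facts. On $\{\ori\conn b\}$, every edge explored by time $\tau_b$ lies to the left of the DFS path $\gamma$ to $b$. Any connection to a vertex right of $b$ can be realized by an open path to the right of $\gamma$, which avoids those edges. Hence opening an explored edge cannot destroy $\{\blast=b\}$, so $\ind_{\{\blast=b\}}$ is increasing up to $\tau_b$.

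Partial FKG with $X=\ind_{\{\blast=b\}}$ and $Y=\pr[E_b\mid\pi]$ then gives $\ex\bigl[\ind_{\{\blast=b\}}\bigl(1-\pr[E_b\mid\mathcal F_{\tau_b}]\bigr)\bigr]\le\eps\,\pr[\blast=b]$. Markov's inequality shows that $\blast$ is a conditional success, meaning $\pr[E_b\mid\mathcal F_{\tau_b}]\ge1-\sqrt\eps$, except on an event of probability at most $\sqrt\eps\,\pr[\ori\conn\bo]$.

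Since $\{\blast=b\}$ carries information about unexplored edges, $\blast$ itself need not be a success. One therefore passes to the first \emph{conditional success} reached by the exploration. This is a stopping time at which the conditional probability of success is at least $1-\sqrt\eps$, and the two steps give $(1-\sqrt\eps)^2\ge1-2\sqrt\eps$. Your $\sqrt\eps$-threshold instinct is right. It must be applied to $\blast$, to certify that a conditional success exists, and then to the first conditional success, not to the first connection.
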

No attempt is made here to optimize $\vph(\eps)$. What is important for our purposes is that it depends only on $\eps$ and converges to $1$ as $\eps\to0$.

For Bernoulli bond percolation on $\Z^d$, let
\[
\theta(p)=\pr_p[0\conn\infty]
\text{ for all } p \in [0,1] \qquad\text{and}\qquad 
p_c=\inf\{p\in[0,1]:\theta(p)>0\}.
\]

\begin{theorem}[Conjecture $\implies$ $\theta(p_c)=0$ on $\Z^d$]
\label{thm_theta_pc}
If Conjecture~\ref{main_conj} holds, then for every $d\geq2$,
$\theta(p_c)=0$. Equivalently, for every $d\geq2$, critical Bernoulli
bond percolation on $\Z^d$ has almost surely no infinite open cluster.
\end{theorem}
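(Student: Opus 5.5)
The plan is to reduce to the Kozma--Nitzan framework, as the introduction indicates: show that Conjecture~\ref{main_conj} implies the finite, constant-parameter version of their Conjecture~3. Their Conjecture~3 is a high-probability connectivity statement. Roughly, it says that if every vertex $v$ of a suitable set is connected to some target set with probability at least $1-\eps$, then, conditionally on the origin reaching the boundary, the origin's cluster is connected to the target set with probability close to $1$. The general version of their conjecture then follows from the finite one by approximation, and their Theorem~6 gives $\theta(p_c)=0$ on $\Z^d$ for every $d\ge2$. The only genuinely new step is therefore the implication from our conjecture to their statement.

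\textbf{Main step.} Fix a finite graph $G$, an origin $\ori$, a boundary $\bo$, and a target set $T\subseteq V$. Assume that $\pr[b\conn T]\ge1-\eps$ for every $b\in\bo$. For each $b\in\bo$ set $E_b=\{b\conn T\}$. This event is $\sigma(\pi)$-measurable and increasing, so it is increasing with respect to the percolation configuration in the sense used here. Conjecture~\ref{main_conj} then gives
\[
\pr\Bigl[\bigcup_{b\in\bo}\bigl(\{\ori\conn b\}\cap\{b\conn T\}\bigr)\Bigr]\ge\vph(\eps)\,\pr[\ori\conn\bo].
\]
The event on the left is contained in $\{\ori\conn T\}\cap\{\ori\conn\bo\}$. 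Hence
\[
\pr[\ori\conn T\mid\ori\conn\bo]\ge\vph(\eps),
\]
which tends to $1$ as $\eps\to0$, as required.

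\textbf{Where the extra randomness is used.} If their conjecture allows different percolation parameters on different edges, or uses auxiliary randomness such as a ghost field, I would encode this through the additional randomness the paper explicitly permits. Events of the form ``connected to $T$ in an enlarged configuration'' remain increasing with respect to $\pi$, because $\pr[A\mid\pi=\omega]$ is monotone in $\omega$.

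\textbf{Main obstacle.} The hard part is matching hypotheses exactly. Their statement may require $\pr[v\conn T]\ge1-\eps$ for all vertices rather than only boundary vertices, may be formulated for infinite graphs, or may involve parameters depending on the edge. To handle this I would first restrict to finite boxes and choose $\bo$ to be the vertices where their hypothesis is imposed. Next I would pass to the limit using monotone convergence of the connection events. Finally I would use the ``standard approximation arguments'' the introduction mentions: approximate edge-dependent parameters by splitting each edge into parallel edges with a common parameter, which the multigraph setting permits. With that, Theorem~6 of Kozma--Nitzan closes the argument.
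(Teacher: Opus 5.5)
Your argument is correct and is essentially the paper's: take $E_b=\{b\conn z\}$, apply Conjecture~\ref{main_conj}, use $\bigcup_{b}(\{\ori\conn b\}\cap E_b)\subseteq\{\ori\conn z\}$, and then pass through approximation to Kozma--Nitzan's Theorem~6. Their Conjecture~3 is not conditional: its hypothesis is $\pr[\ori\conn\bo]>1-\delta$ and its conclusion is $\pr[\ori\conn z]>1-\eta$, so you should finish with $\pr[\ori\conn z]\ge\vph(\delta)\,\pr[\ori\conn\bo]>\vph(\delta)(1-\delta)$ and choose $\delta=\delta(\eta)$ with $\vph(\delta)(1-\delta)>1-\eta$; separately, auxiliary randomness cannot absorb edge-dependent parameters on edges used by $\{\ori\conn b\}$, so the parallel-edge/network approximation you mention afterwards is the right tool there.
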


Theorem~\ref{thm_theta_pc} already follows from the work of
Kozma and Nitzan~\cite{kozma2024reductionthetapc0}, together with
Conjecture~\ref{main_conj}; we give the short deduction below.
We nevertheless intend to include our own proof in a subsequent
version. Although the two arguments are close at the conceptual
level, our proof differs in several details, and including it will
also make the paper self-contained.

\begin{proof}[Proof of Theorem~\ref{thm_theta_pc} via Theorem~6 of Kozma and Nitzan]
Kozma and Nitzan introduced a high-probability connectivity conjecture,
stated as Conjecture~3 in their paper. In the finite constant-parameter
setting, it asserts that for every $\eta>0$ there exists
$\delta=\delta(\eta)>0$ such that, for every finite graph $G=(V,E)$
endowed with Bernoulli bond percolation with a common edge parameter
$p$, every $A\subset V$, and every $o,z\in V$, if
\[
    \pr[o\conn A]>1-\delta
    \qquad\text{and}\qquad
    \pr[a\conn z]>1-\delta
    \quad\text{for every }a\in A,
\]
then $\pr[o\conn z]>1-\eta$.

We show that Conjecture~\ref{main_conj} implies this statement.
Fix $\eta>0$. Since $\vph(\delta)\to1$ as $\delta\to0$, we may choose
$\delta>0$, depending only on $\eta$, sufficiently small that
$\vph(\delta)>0$ and
$\vph(\delta)(1-\delta)>1-\eta$.

Take $\bo=A$ and, for each $a\in A$, let $E_a=\{a\conn z\}$.
These events are increasing. If
$\pr[a\conn z]>1-\delta$ for every $a\in A$, then in particular
$\pr[E_a]\geq1-\delta$, and Conjecture~\ref{main_conj} gives
\[
\pr\left[
    \bigcup_{a\in A}
    \bigl(\{o\conn a\}\cap\{a\conn z\}\bigr)
\right]
\geq
\vph(\delta)\pr[o\conn A].
\]
Moreover,
\[
\bigcup_{a\in A}
    \bigl(\{o\conn a\}\cap\{a\conn z\}\bigr)
=
\{o\conn A\}\cap\{o\conn z\}.
\]
Hence, if $\pr[o\conn A]>1-\delta$, then
\[
    \pr[o\conn z]
    \geq \vph(\delta)\pr[o\conn A]
    > \vph(\delta)(1-\delta)
    > 1-\eta.
\]
Thus the finite constant-parameter version of Conjecture~3 of
Kozma and Nitzan holds.\footnote{The restriction to finite graphs
with a common edge parameter is inessential. The infinite-graph case
follows by finite-volume approximation, while arbitrary edge parameters
can be treated by approximating each edge by a finite constant-parameter
network and passing to the limit.}
Hence Conjecture~3 holds in the generality in which it is stated in
\cite{kozma2024reductionthetapc0}. Their Theorem~6 therefore yields
$\theta(p_c)=0$ for every $d\geq2$.
\end{proof}

\subsection{The conjecture is not a consequence of monotonicity alone}

The events $E_b$ appearing in Conjecture~\ref{main_conj} are increasing with respect to the percolation configuration, and the connectivity events $\{\ori \conn b\}$ are increasing as well. A natural question is therefore whether the conjecture could follow solely from monotonicity considerations. The purpose of this subsection is to show that this is not the case.

More precisely, the analogue of Conjecture~\ref{main_conj} obtained by replacing the connectivity events $\{\ori\conn b\}$ with arbitrary increasing events is false. Thus, if Conjecture~\ref{main_conj} is true, its validity must rely on structural properties of connectivity events which go beyond monotonicity.

In fact, the analogue of (\ref{mainthmeq2}) for arbitrary increasing events can fail in a dramatic way: even when all the events $(E_b)_{b\in\bo}$ have probability arbitrarily close to one, the corresponding conditional probability may be arbitrarily small.

\begin{proposition}
For every $\varepsilon,\varepsilon'>0$, there exist a non-empty finite graph $G=(V,E)$, $\ori \in V$, $\bo \subseteq V$, a parameter $p\in[0,1]$, and two families of increasing events $(C_b)_{b \in \bo}$ and $(E_b)_{b \in \bo}$ such that $\pr\left[\cup_{b \in \bo} C_b\right]>0$ and
\[
\pr\left[
\bigcup_{b\in\bo}
\Bigl(C_b\cap E_b\Bigr)
\middle|
\bigcup_{b\in\bo} C_b
\right]
\le\eps',
\]
while
\[
\forall b\in\bo,\qquad
\pr[E_b]\ge1-\eps.
\]
\end{proposition}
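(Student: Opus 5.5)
The plan is to give an explicit construction in which the events $C_b$ are \emph{not} connectivity events but are built so that, for each $b$, the event $C_b$ tends to occur exactly when $E_b$ fails. Monotonicity still forces a positive correlation between $C_b$ and $E_b$ for each fixed $b$, by Harris--FKG. The union over $b$, however, can be dominated by configurations in which only the ``wrong'' $C_b$ holds.

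\textbf{Construction.} Fix $n\ge 2$ and let $G$ be the star with centre $\ori$ and leaves $b_1,\dots,b_n$. Set $\bo=\{b_1,\dots,b_n\}$ and write $e_i$ for the edge $\ori b_i$. For each $i$, put
\[
C_{b_i}=\{\pi_{e_j}=1\ \text{for all } j\neq i\},\qquad E_{b_i}=\{\pi_{e_i}=1\}.
\]
Both are increasing, and both are measurable with respect to $\pi$. Choose $p=1-\eps$ (assuming $\eps<1$; otherwise replace $\eps$ by $\min(\eps,1/2)$). Then $\pr[E_b]=p\ge 1-\eps$ for every $b\in\bo$.

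\textbf{Computation.} For each $i$, the event $C_{b_i}\cap E_{b_i}$ is the event that all edges are open. Hence the union over $b$ of these events has probability $p^n$. The union $\bigcup_b C_b$ is the event that at most one edge is closed, so
\[
\pr\Bigl[\bigcup_b C_b\Bigr]=p^n+np^{n-1}(1-p)>0.
\]
The conditional probability in the statement therefore equals
\[
\frac{p}{p+n(1-p)}=\frac{1-\eps}{1-\eps+n\eps}.
\]
This tends to $0$ as $n\to\infty$ with $\eps$ fixed. Choosing $n$ large enough that $(1-\eps)/(n\eps)\le\eps'$ gives the required bound.

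\textbf{Main difficulty.} The only delicate point is finding the construction, since the computation is routine. Natural first attempts fail. Taking $E_b$ independent of $b$, or taking $C_b$ to be a single open edge, makes the conditional probability close to $1$ by positive association. The key idea is to index the events so that the cheapest way to realize $\bigcup_b C_b$, namely having exactly one closed edge, destroys precisely the event $E_b$ attached to the $C_b$ that occurs. Connectivity events cannot do this, because $\{\ori\conn b\}$ cannot require an edge outside its path.
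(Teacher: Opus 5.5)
Your construction is correct. On the star, $\bigcup_b C_b$ is exactly the event that at most one edge is closed, and $\bigcup_b (C_b\cap E_b)$ is the event that all edges are open. The conditional probability is therefore exactly $(1-\varepsilon)/(1-\varepsilon+n\varepsilon)$.

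The paper uses the same mechanism in a different regime. In both, each $C_b$ requires a prescribed edge set $S_b$ to be open, and $E_b$ is an increasing event on the complementary edges. The index set is rich enough that $\bigcup_b C_b$ becomes a threshold event, typically realized right at its threshold, where the $E_b$ attached to the realized $C_b$ fails.

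The paper fixes $p=1/2$ and takes $5n$ edges, with $S_b$ ranging over all $2n$-subsets and $E_b=\{\text{at least } n \text{ edges of } E\setminus S_b \text{ are open}\}$. The two unions are then $\{\text{at least } 2n \text{ open edges}\}$ and $\{\text{at least } 3n \text{ open edges}\}$, and the paper concludes by binomial concentration, both for $\pr[E_b]$ and for the conditional probability.

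Your leave-one-out choice $S_{b_i}=E\setminus\{e_i\}$ gives several advantages:
\begin{itemize}
\item an exact closed form for the conditional probability;
\item an explicit admissible $n\ge(1-\varepsilon)/(\varepsilon\varepsilon')$;
\item no asymptotic estimates;
\item a star with $n$ boundary vertices instead of $\binom{5n}{2n}$ of them.
\end{itemize}

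What you give up is that $p=1-\varepsilon$ must tend to $1$ as $\varepsilon\to0$, whereas the paper's counterexample works at a fixed non-degenerate parameter. This is harmless for the statement, which lets $p$ depend on $\varepsilon$; this is consistent with $\varphi$ in Conjecture~\ref{main_conj} being required not to depend on $p$.

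It is also easy to repair. Fix any $p\in(0,1)$ and replace each $e_i$ by $m$ parallel edges with $q:=(1-p)^m\le\varepsilon$. Let $E_{b_i}=\{\ori\conn b_i\}$ be the event that some edge of block $i$ is open, and let $C_{b_i}$ be the event that every other block contains an open edge. This gives the conditional probability $(1-q)/(1-q+nq)$.
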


\begin{proof}
Let $n\in\mathbb N$. Set $p=1/2$. We consider a graph $G=(V,E)$ having $5n$ edges and at least $\binom{5n}{2n}$ vertices. We choose a subset $B\subseteq V$ with
$|\bo|=\binom{5n}{2n},$ and fix a bijection $b\longmapsto S_b$
from $\bo$ onto the collection of all subsets of $E$ containing exactly
$2n$ edges. For every $b\in \bo$, define
\[
    C_b=\{\text{all edges in }S_b\text{ are open}\},
    \qquad
    E_b=\{\text{at least }n\text{ edges in }E\setminus S_b
    \text{ are open}\}.
\]
Both events are increasing. Moreover,
\begin{equation*}
\pr [E_b]=\pr[X_n \ge n], \qquad X_n \sim \mathrm{Bin}(3n,\tfrac12).
\end{equation*}
Since the mean of $X_n$ is $3n/2$, standard concentration estimates imply that
\[
\pr[X_n\ge n]\longrightarrow1
\qquad\text{as }n\to\infty.
\]
Thus,
\[
\pr[E_b]\longrightarrow1
\qquad\text{as }n\to\infty,
\]
uniformly in $b\in\bo$. On the other hand,
\begin{equation*}
\bigcup_{b\in \bo} C_b=\{\text{at least }2n\text{ edges are open}\},
\end{equation*}
since the existence of $2n$ open edges is equivalent to the existence of a boundary vertex $b$ whose associated subset is entirely open. Similarly,
\begin{equation*}
\bigcup_{b\in\bo}(C_b\cap E_b)=\{\text{at least }3n\text{ edges are open}\}.
\end{equation*}

Therefore, 
\begin{equation*}
\pr\left[
\bigcup_{b\in\bo}
\Bigl(C_b\cap E_b\Bigr)
\middle|
\bigcup_{b\in\bo} C_b
\right]=\pr[X_n' \ge 3n \mid X_n' \ge 2n],
\end{equation*}
where $X_n'\sim\mathrm{Bin}\left(5n,\tfrac12\right)$. Since the mean of $X_n'$ is $5n/2$, standard concentration estimates imply that the conditional probability above converges to $0$ as $n\to\infty$.

Hence, given $\varepsilon,\varepsilon'>0$, we may choose $n$
sufficiently large so that, simultaneously,
\[
\pr[E_b]\ge 1-\varepsilon
\qquad\text{for every }b\in\bo,
\]
and
\[
\pr\left[
\bigcup_{b\in\bo}(C_b\cap E_b)
\,\middle|\,
\bigcup_{b\in\bo}C_b
\right]
\le \varepsilon'.
\]
This concludes the proof.
\end{proof}

The proposition shows that the connectivity structure in Conjecture~\ref{main_conj} is essential.

%% file: Preuve_planaire.tex
\section{Proof of Theorem \ref{planar_thm}}

For the remainder of this section, fix a parameter $p \in (0,1)$, $\varepsilon>0$ and a family $(E_b)_{b\in \bo}$ of events that are increasing with respect to the percolation configuration and satisfy $\mathbb{P}[E_b]\geq 1-\varepsilon$ for every $b \in \bo$.

\subsection{Geometric setup and left-first DFS}

Throughout this section, we assume that $G$ is planar and fix a planar
embedding of $G$. Replacing $G$ by the connected component containing
$\ori$, and $\bo$ by its intersection with this component, we may assume
without loss of generality that $G$ is connected. By hypothesis, the
origin $\ori$ and every vertex of $\bo$ lie on the boundary of the outer
face. Since the conclusion is immediate when $\ori\in \bo$, we assume for
convenience throughout this section that $\ori\notin \bo$.

We emphasize that $\bo$ is not assumed to contain all the vertices on
the boundary of the outer face. Apart from the assumptions above, it is
an arbitrary subset of these vertices.

Choose once and for all an occurrence of $\ori$ in the clockwise boundary
walk of the outer face. Starting from this occurrence, traverse the
outer-face boundary clockwise. If a vertex of $\bo$ is encountered
several times during the traversal, only its first occurrence is
retained. This defines a linear order on the vertices of $\bo$, in which
every vertex appears exactly once
(see Figure~\ref{fig:vertex_order}). We refer to this order as the
\emph{boundary order}.

For $b,b'\in B$, we say that $b'$ lies strictly to the right of $b$ if $b'$
occurs after $b$ in the boundary order.

\begin{figure}[ht]
\centering
\includegraphics[ trim=0.5cm 1cm 1.5cm 0.5cm,
  clip, width=1\textwidth]{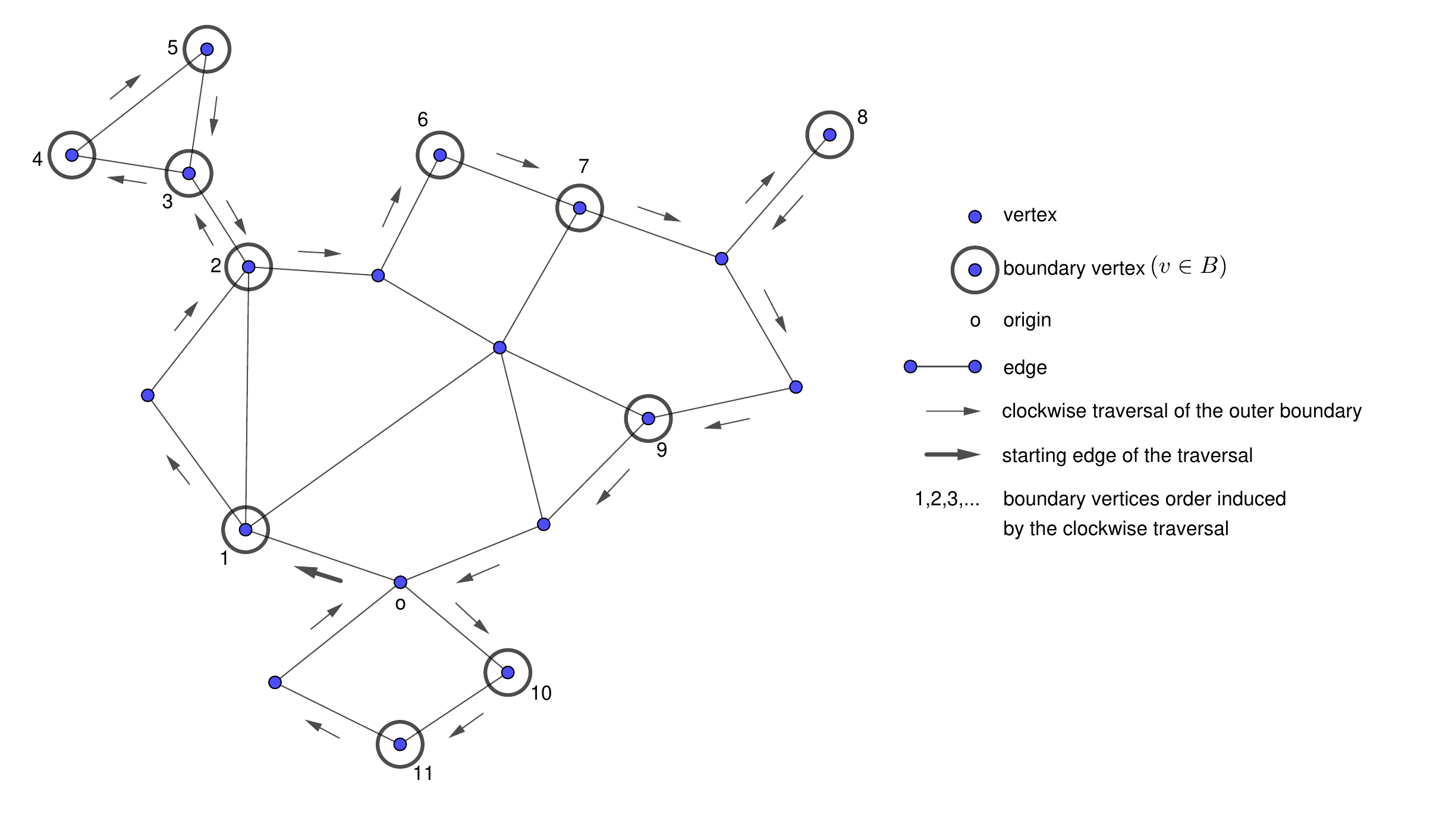}
\caption{
Clockwise order of the boundary vertices. The outer-face boundary is
traversed clockwise starting from the fixed occurrence of the origin
$\ori$, with the initial step indicated by the thick arrow. Boundary
vertices are numbered according to their first encounter along this
traversal.
}
\label{fig:vertex_order}
\end{figure}

We now define the depth-first search used throughout the proof. The
exploration starts at the origin $\ori$. The edges incident to $\ori$ are
taken in clockwise order, beginning with the edge traversed immediately
after the chosen occurrence of $\ori$ in the outer-face boundary walk
(see Figure~\ref{fig:order_origin}).

Suppose that the exploration reaches a vertex $v$ through an edge $e$.
The remaining edges incident to $v$ are taken in clockwise order,
beginning with the edge immediately to the left of $e$
(see Figure~\ref{fig:other_vertex_order}).

If the next edge in this order leads to a previously discovered vertex,
it is skipped. Otherwise, the edge is explored, meaning that its state
is revealed. If the edge is closed, the exploration does not cross it.
If the edge is open, its other endpoint is declared discovered and the
exploration continues recursively from that vertex.

When all the edges incident to the current vertex have been treated,
the exploration backtracks according to the usual depth-first search
rule. We refer to the resulting exploration procedure as the
\emph{left-first depth-first search}, or \emph{left-first DFS}.

Time is indexed by the number of explored edges. Thus, exploring an
edge increases time by one, whereas skipping an edge leading to a
previously discovered vertex does not increase time.

An example of a left-first DFS exploration is shown in
Figure~\ref{fig:DFS_example}.

\begin{figure}[H]
\centering
\includegraphics[ trim=2cm 3.5cm 9cm 4cm, clip, width=0.85\textwidth]{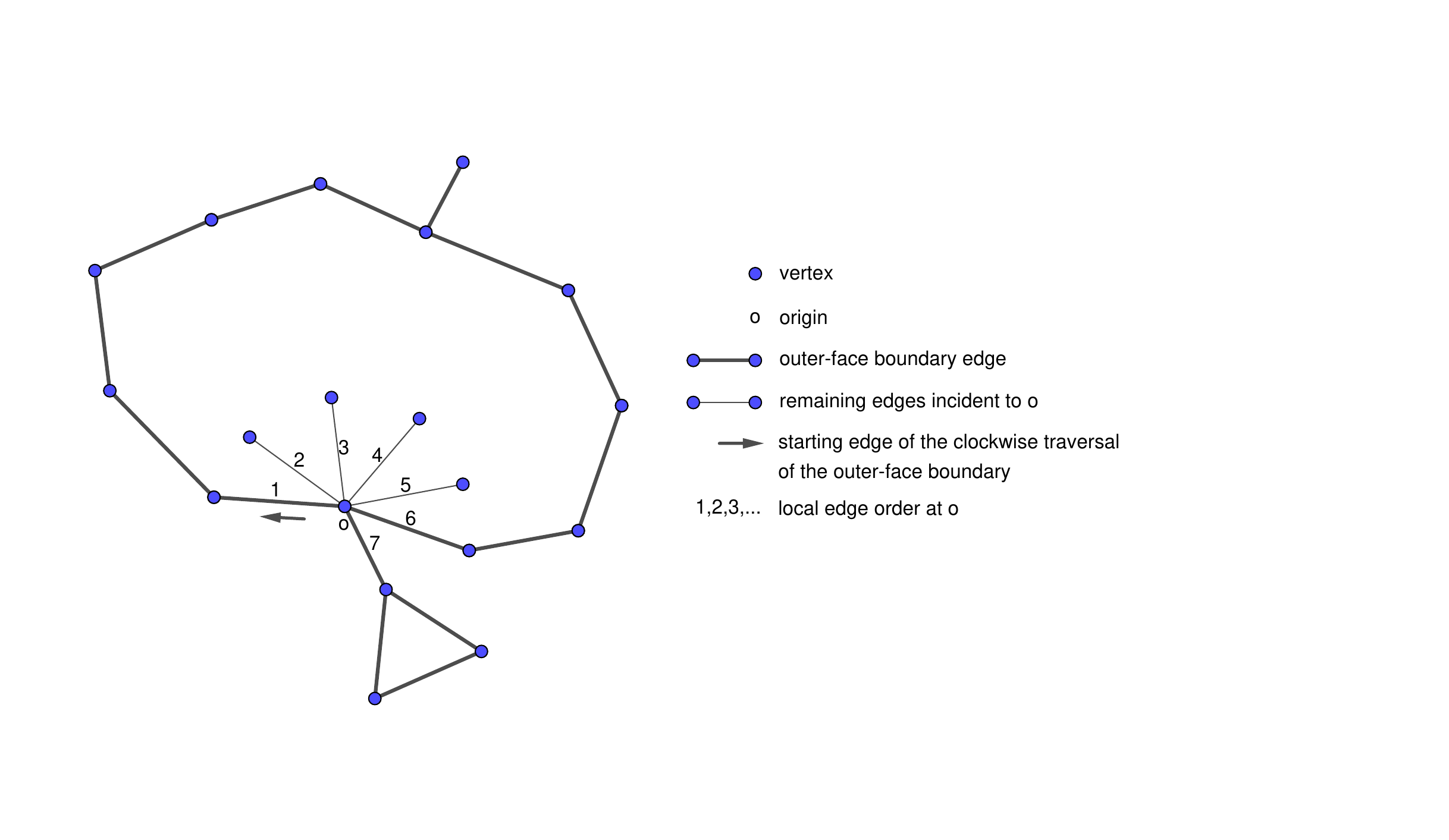}
\caption{
Local ordering of the edges incident to the origin. The labels indicate the order in which these edges are considered by the left-first DFS.
}
\label{fig:order_origin}
\end{figure}

\begin{figure}[H]
\centering
\includegraphics[ trim=2.7cm 6.2cm 7.0cm 1.5cm, clip, width=0.85\textwidth]{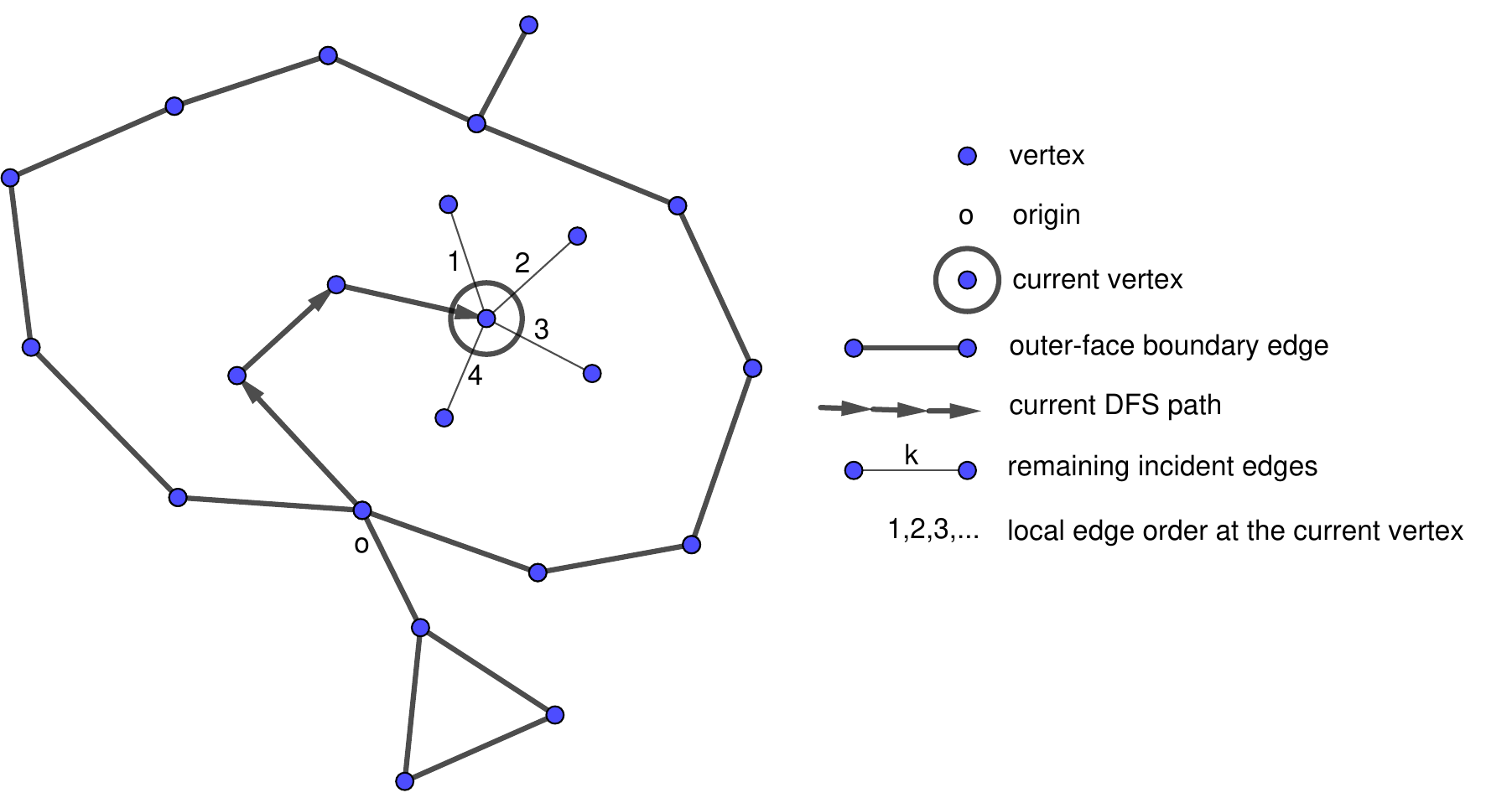}
\caption{
Local ordering of the remaining edges incident to the current vertex. The labels indicate the order in which these edges are considered by the left-first DFS.
}
\label{fig:other_vertex_order}
\end{figure}

\begin{figure}[ht]
\centering
\includegraphics[ trim=3.3cm 1.3cm 2.5cm 0.3cm, clip, width=1\textwidth]{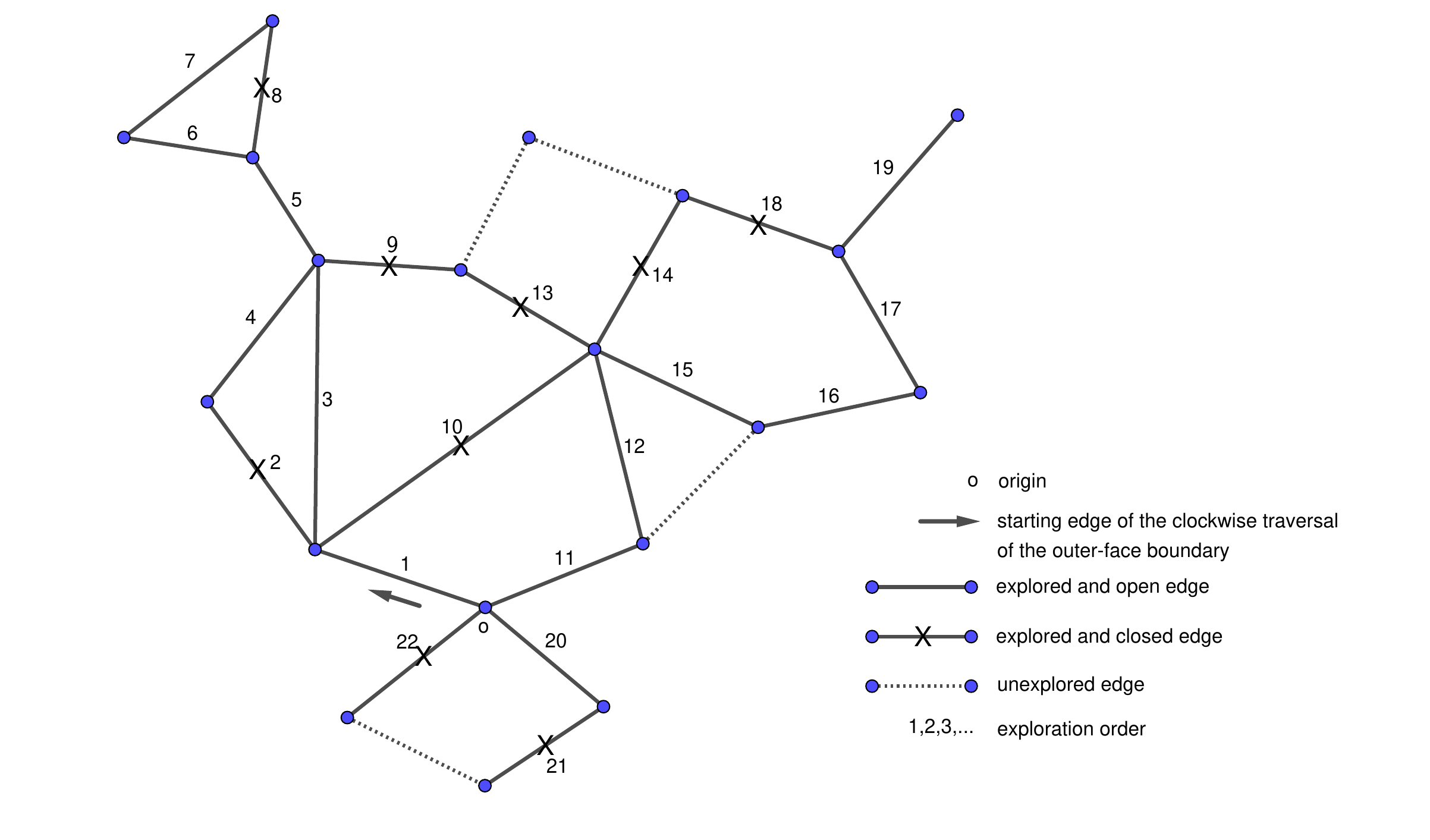}
\caption{
Example of a left-first DFS exploration. Explored edges are shown in bold; closed edges are additionally marked with a cross, while unexplored edges are shown as dotted lines. The labels indicate the order in which edges are explored.
}
\label{fig:DFS_example}
\end{figure}

The open edges through which new vertices are discovered form a tree
rooted at $\ori$, which we call the \emph{DFS exploration tree}.

For every time $t\geq 0$, let $\mathcal F_t$ denote the
$\sigma$-algebra generated by the exploration up to time $t$.
Equivalently, $\mathcal F_t$ contains exactly the information revealed
by the left-first DFS up to time $t$. In particular, $\mathcal F_t\subseteq\sigma(\pi)$ for every $t\ge 0$. Throughout this section, all stopping times are understood to be with respect to the filtration $(\mathcal F_t)_{t\geq 0}$.

For every time $t\geq 0$, let $\mathcal H_t$ denote the
\emph{history set} at time $t$, namely the set of edges explored by
the left-first DFS up to time $t$.

Since at most $|E|$ edges can be explored, the filtration and the
history set are constant after time $|E|$. We use the conventions $\mathcal F_\infty:=\mathcal F_{|E|}$ and $\mathcal H_\infty:=\mathcal H_{|E|}$.

If $\tau$ is a stopping time with respect to the filtration
$(\mathcal F_t)_{t\geq 0}$, we define the corresponding random history
set by $\mathcal H_\tau(\omega):=\mathcal H_{\tau(\omega)}(\omega)$,
using the convention above when $\tau(\omega)=\infty$.

For $b\in \bo$, let $\tau_b$ be the time at which $b$ is discovered,
that is, the time immediately after the exploration of the open edge
by which $b$ is first reached. If $b$ is never reached, we set
$\tau_b=\infty$. Then $\tau_b$ is a stopping time.

\FloatBarrier
\subsection{Main ingredients}

Let $\mathrm{undef}$ be a cemetery value distinct from every element of $\bo$. We define the random variable $\blast\colon\Omega\to\bo\cup\{\mathrm{undef}\}$ as follows: if at least one boundary vertex is connected to $\ori$, then $\blast$ is the last such vertex in the boundary order defined above; otherwise, $\blast=\mathrm{undef}$. We refer to $\blast$ as the \emph{rightmost connection}.

\begin{definition}
Let $b\in\bo$. We call $b$ a \emph{conditional success} if
$b$ is connected to $\ori$ and
\[
\pr[E_b\mid\mathcal{F}_{\tau_b}]
\ge
1-\sqrt{\eps}.
\]
\end{definition}

We use the expression ``$\blast$ is a conditional success'' for the event
\[
\bigcup_{b\in\bo}
\left(
    \{\blast=b\}
    \cap
    \{b\text{ is a conditional success}\}
\right),
\]
and the expression ``$\blast$ is not a conditional success'' for its
complement. The proof of the planar theorem relies on the following two lemmas.

\begin{lemma}
\label{lemmacond}
\begin{equation*}
\pr\bigl[\text{there exists a success}\bigr]
\ge
\left(1-\sqrt{\eps}\right)
\pr\bigl[\text{there exists a conditional success}\bigr].
\end{equation*}
\end{lemma}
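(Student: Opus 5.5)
The plan is to replace the existence of a conditional success by the \emph{first} conditional success discovered by the left-first DFS, and then apply the tower property at its discovery time. Two facts are used throughout. First, a boundary vertex $b$ is connected to $\ori$ if and only if the DFS discovers it, so $\{\ori\conn b\}=\{\tau_b<\infty\}$. Second, distinct vertices are discovered at distinct times, so the finite values $\tau_b$ are pairwise distinct.

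\textbf{Step 1: the relevant events are $\mathcal F_{\tau_b}$-measurable.} For each $b\in\bo$, let
\[
S_b:=\{\tau_b<\infty\}\cap\bigl\{\pr[E_b\mid\mathcal F_{\tau_b}]\ge 1-\sqrt{\eps}\bigr\},
\]
which is exactly the event that $b$ is a conditional success. Since $\tau_b$ is a stopping time, $\pr[E_b\mid\mathcal F_{\tau_b}]$ is $\mathcal F_{\tau_b}$-measurable, and hence so is $S_b$. Concretely, $\mathcal F_{\tau_b}$ is generated by the pieces $\{\tau_b=t\}\cap A$ with $A\in\mathcal F_t$, and everything is finite. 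This holds even though $E_b$ need not be $\sigma(\pi)$-measurable.

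\textbf{Step 2: the first conditional success.} On the event that some conditional success exists, let $\beta$ be the conditional success with the smallest discovery time $\tau_\beta$. I claim $\{\beta=b\}\in\mathcal F_{\tau_b}$. Indeed,
\[
\{\beta=b\}=S_b\cap\bigcap_{b'\neq b}\bigl(\{\tau_{b'}\ge\tau_b\}\cup S_{b'}^c\bigr).
\]
On $\{\tau_{b'}<\tau_b\}$, the event $S_{b'}$ belongs to $\mathcal F_{\tau_{b'}}\subseteq\mathcal F_{\tau_b}$. Moreover, $\{\tau_{b'}<\tau_b\}$ itself lies in $\mathcal F_{\tau_b}$. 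I will check this measurability carefully by decomposing over the finitely many values $\tau_b=t$. I expect this to be the only delicate point.

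\textbf{Step 3: conclude via the tower property.} If $\beta=b$ and $E_b$ occurs, then $b$ is a success. The events $\{\beta=b\}$ are disjoint and their union is the event that a conditional success exists. Therefore
\[
\pr[\text{there exists a success}]
\ge\sum_{b\in\bo}\pr\bigl[\{\beta=b\}\cap E_b\bigr]
=\sum_{b\in\bo}\ex\Bigl[\ind_{\{\beta=b\}}\,\pr[E_b\mid\mathcal F_{\tau_b}]\Bigr].
\]
On $\{\beta=b\}\subseteq S_b$ we have $\pr[E_b\mid\mathcal F_{\tau_b}]\ge 1-\sqrt{\eps}$. Hence the sum is at least
\[
(1-\sqrt{\eps})\sum_{b\in\bo}\pr[\beta=b]
=(1-\sqrt{\eps})\,\pr[\text{there exists a conditional success}],
\]
which is the claim of Lemma~\ref{lemmacond}.

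The main obstacle is the measurability in Step~2. A naive choice such as ``any conditional success'', or $\blast$ itself, is not determined at time $\tau_b$. Selecting the earliest one in DFS time is precisely what makes the selection a stopping-time decision, and so lets the tower property apply.
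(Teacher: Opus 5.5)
Your proof is correct and is essentially the paper's argument. Both take the first conditional success in DFS order and apply the tower property at its discovery time; the only difference is that you decompose over $\{\beta=b\}$ and condition on $\mathcal F_{\tau_b}$ for each fixed $b$, whereas the paper conditions directly on $\mathcal F_{\tau_{\bfirst}}$, so your explicit check that $\{\beta=b\}\in\mathcal F_{\tau_b}$ (via the standard fact $A\cap\{\sigma<\tau\}\in\mathcal F_\tau$ for $A\in\mathcal F_\sigma$) spells out a measurability point the paper leaves implicit.
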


\begin{lemma}
\label{lemmarightmostconn}
\begin{equation*}
\pr\bigl[\text{$\blast$ is a conditional success}\bigr]
\ge
\left(1-\sqrt{\eps}\right)\pr[\ori\conn\bo].
\end{equation*}
\end{lemma}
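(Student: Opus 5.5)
The plan is to bound the complementary quantity: I aim to show
\[
\pr\bigl[\ori\conn\bo,\ \blast \text{ is not a conditional success}\bigr]\le \sqrt{\eps}\,\pr[\ori\conn\bo].
\]
Since $\{\ori\conn\bo\}=\bigsqcup_{b\in\bo}\{\blast=b\}$, I decompose along the value of $\blast$. For each $b$, let $\mathrm{Bad}_b:=\{\tau_b<\infty\}\cap\{\pr[E_b\mid\mathcal F_{\tau_b}]<1-\sqrt\eps\}$. This event is $\mathcal F_{\tau_b}$-measurable, and it remains to control $\sum_b\pr[\{\blast=b\}\cap\mathrm{Bad}_b]$.

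The first step is a planar structural fact about the left-first DFS: it discovers the boundary vertices connected to $\ori$ in increasing boundary order. This follows from the fact that the DFS tree together with the outer face behaves like a clockwise sweep. As a consequence, at time $\tau_b$ no vertex strictly to the right of $b$ has been discovered. Moreover, on $\{\tau_b<\infty\}$ the event $\{\blast=b\}$ coincides with the event that no vertex of $\bo$ strictly to the right of $b$ is reached by the rest of the exploration. Given $\mathcal F_{\tau_b}$, this event is decreasing in the unexplored edges. It is not fully monotone, because it depends on the explored region, but it is monotone in the edges not yet revealed. This is exactly the "monotonicity up to a stopping time" setting, and I would invoke the partial FKG proposition proved in the appendix at this point.

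Conditionally on $\mathcal F_{\tau_b}$, the event $E_b^c$ is decreasing in the percolation configuration (including the unexplored edges), and $\{\blast=b\}$ is decreasing in the unexplored edges. Partial FKG then gives, on $\{\tau_b<\infty\}$,
\[
\pr[E_b^c\cap\{\blast=b\}\mid\mathcal F_{\tau_b}]\ \ge\ \pr[E_b^c\mid\mathcal F_{\tau_b}]\ \pr[\blast=b\mid\mathcal F_{\tau_b}].
\]
On $\mathrm{Bad}_b$ the first factor on the right exceeds $\sqrt\eps$. Taking expectations yields
\[
\sqrt\eps\,\pr[\{\blast=b\}\cap\mathrm{Bad}_b]\le\pr[E_b^c\cap\{\blast=b\}\cap\mathrm{Bad}_b].
\]
Summing over $b$, the events $\{\blast=b\}$ are disjoint, so the bound reduces to showing $\sum_b\pr[E_b^c\cap\{\blast=b\}\cap\mathrm{Bad}_b]\le\eps\,\pr[\ori\conn\bo]$, or some variant strong enough to give the factor $\sqrt\eps$ after dividing.

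The main obstacle is this last summation. The naive bound $\pr[E_b^c]\le\eps$ holds for each $b$ separately and would lose a factor $|\bo|$. To handle it, I would exploit the rightmost structure once more. On $\{\blast=b\}$ the exploration after $\tau_b$ never reaches the right of $b$. So I would try to compare $\pr[E_b^c\cap\{\blast=b\}\mid\mathcal F_{\tau_b}]$ with $\eps$ times the conditional probability of $\{\tau_b<\infty\}$ and no earlier-terminating connection, and then telescope over $b$ in decreasing boundary order. The idea is that the events "$b$ is discovered and is the last discovery" partition $\{\ori\conn\bo\}$, and this should allow the weights $\pr[\blast=b]$ to replace $1$ in the sum. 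If that telescoping fails, the fallback is to apply the partial FKG inequality in the opposite direction, with $E_b$ increasing, to a suitably stopped version of $\{\blast=b\}$. The goal in that case would be an estimate of the form $\pr[E_b\mid\mathcal F_{\tau_b},\blast=b]\ge\pr[E_b\mid\mathcal F_{\tau_b}]$-type, at which point the Markov-type choice of threshold $\sqrt\eps$ produces the stated factor $1-\sqrt\eps$.
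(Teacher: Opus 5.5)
\textbf{There is a genuine gap: your reduction ends in an inequality that is false.}

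Your conditional Harris--FKG step is correct. Given $\mathcal F_{\tau_b}$, the unexplored edges are i.i.d., and both $\pr[E_b^c\mid\pi]$ and $\ind_{\{\blast=b\}}$ are decreasing in them. But what this step says is that, through the unexplored edges, $\{\blast=b\}$ is \emph{negatively} correlated with $E_b$. After it you still need
$\sum_b\pr[E_b^c\cap\{\blast=b\}\cap\mathrm{Bad}_b]\le\eps\,\pr[\ori\conn\bo]$.
This is essentially the claim that the rightmost connection is a success, and it fails. Here is an example.
\begin{itemize}
\item Take a star with centre $\ori$ and leaves $b_1,\dots,b_n$ in boundary order. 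This is also the order in which the left-first DFS explores the edges $e_k=\ori b_k$.
\item Choose $p$ with $(1-p)^{n-1}=\eps$, and set $E_{b_k}=\{\exists j\neq k:\ e_j\text{ open}\}$, so that $\pr[E_{b_k}]=1-\eps$.
\item Then $E_{b_k}^c\cap\{\blast=b_k\}=\{e_k\text{ is the only open edge}\}$, which has probability $p\eps$.
\item On this event $\pr[E_{b_k}\mid\mathcal F_{\tau_{b_k}}]=1-(1-p)^{n-k}$, which is $<1-\sqrt\eps$ whenever $n-k<(n-1)/2$.
\item Summing over these $k$, your left-hand side is at least $\tfrac12(n-1)p\,\eps\ge\tfrac12(1-p)\,\eps\log(1/\eps)$. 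This exceeds $\eps\ge\eps\,\pr[\ori\conn\bo]$ as soon as $(1-p)\log(1/\eps)>2$.
\end{itemize}
So no telescoping can close the argument.

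Your fallback fails for the same reason. Your own FKG step gives $\pr[E_b\mid\mathcal F_{\tau_b},\blast=b]\le\pr[E_b\mid\mathcal F_{\tau_b}]$, the opposite of what you want. In the example, the left side is $0$ while the right side is positive.

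Separately, the left-first DFS does not discover connected boundary vertices in boundary order. In a triangle $\ori b_1 b_2$ with $\ori b_1$ closed and $\ori b_2$, $b_2b_1$ open, $b_2$ must be found before $b_1$. This claim is not load-bearing, since $\{\blast=b\}$ is decreasing in the unexplored edges given $\mathcal F_{\tau_b}$ regardless. However, your description of $\{\blast=b\}$ via ``the rest of the exploration'' should instead read: no $b'$ strictly to the right of $b$ is connected to $\ori$.

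\textbf{The missing idea is that the monotonicity you need is in the \emph{explored} edges, not the unexplored ones.}
\begin{itemize}
\item The partial FKG inequality requires $X$ to be increasing up to $\tau_b$, i.e.\ monotone in the edges of $\mathcal H_{\tau_b}$.
\item For $X=\ind_{\{\blast=b\}}$, this is exactly the planar Lemma~\ref{lemcrois}. On $\{\ori\conn b\}$, every edge explored by $\tau_b$ lies to the left of the DFS path to $b$. Any connection to a vertex strictly right of $b$ can be rerouted to the right of that path, so it avoids those edges.
\item Applying the partial FKG inequality with $Y=\pr[E_b\mid\pi]$ and $S_b=\pr[E_b\mid\mathcal F_{\tau_b}]$ gives $\ex[\ind_{\{\blast=b\}}S_b]\ge\pr[\blast=b]\,\pr[E_b]\ge(1-\eps)\pr[\blast=b]$.
\end{itemize}
This correlates $\{\blast=b\}$ with the $\mathcal F_{\tau_b}$-measurable quantity $S_b$ across exploration histories, never with $\ind_{E_b}$ itself. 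The unfavourable correlation in the unexplored edges therefore never enters. Markov's inequality at level $\sqrt\eps$ applied to $\ind_{\{\blast=b\}}(1-S_b)$, followed by summation over $b$, then yields the lemma.
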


We first show how Theorem \ref{planar_thm} follows from these two lemmas.

\begin{proof}[Proof of Theorem \ref{planar_thm}]
\begin{align*}
\pr[\text{there exists a success}]
&\ge
\left(1-\sqrt{\eps}\right)
\pr\bigl[\text{there exists a conditional success}\bigr]
&&\text{by Lemma~\ref{lemmacond}}
\\
&\ge
\left(1-\sqrt{\eps}\right)
\pr\bigl[\text{$\blast$ is a conditional success}\bigr]
\\
&\ge
\left(1-\sqrt{\eps}\right)^2
\pr[\ori\conn\bo]
&&\text{by Lemma~\ref{lemmarightmostconn}}
\\
&\ge
\left(1-2\sqrt{\eps}\right)
\pr[\ori\conn\bo].
\end{align*}
This proves Theorem \ref{planar_thm}.
\end{proof}

Lemma~\ref{lemmarightmostconn} ensures that a conditional
success exists with high probability, using the rightmost connection as
a witness. The rightmost connection itself should not be expected to be
a success with high probability, since the event $\{\blast=b\}$ carries
additional information about the unexplored part of the configuration.
Lemma~\ref{lemmacond} converts the existence of a
conditional success into the existence of an actual success by
considering the first conditional success reached by the exploration.

We now prove Lemma~\ref{lemmacond}. 
\begin{proof}[Proof of Lemma~\ref{lemmacond}]
Let $\bfirst$ be the $\bo\cup\{\mathrm{undef}\}$-valued random variable
defined as follows. If a conditional success is encountered during the
exploration, then $\bfirst$ is the first such vertex; otherwise,
$\bfirst=\mathrm{undef}$. 

We use the expression ``$\bfirst\text{ is a success}$''
for the event
\begin{equation*}
\bigcup_{b \in \bo} \{\bfirst=b \}\cap \{b \text{ is a success}\}.
\end{equation*}
Let $\tau_{\bfirst}$ be the hitting time of the vertex
$\bfirst$, with the convention that $\tau_{\bfirst}=\infty$ if
$\bfirst=\mathrm{undef}$.
This is a stopping time, since at each time $t$ one can determine from
$\mathcal F_t$ whether a conditional success has already been reached. Therefore,
\begin{equation*}
\pr[\text{there exists a success}]
\ge
\pr\bigl[\bfirst\text{ is a success}\bigr]
=
\ex\Bigl[
\pr\Bigl[
\bfirst\text{ is a success}
\,\Bigm|\,
\mathcal{F}_{\tau_{\bfirst}}
\Bigr]
\Bigr].
\end{equation*}

On the event $\{\bfirst\neq\mathrm{undef}\}$,
\[
\pr\left[
\bfirst\text{ is a success}
\,\big|\,
\mathcal{F}_{\tau_{\bfirst}}
\right]
\ge
1-\sqrt{\eps},
\]
hence
\begin{align*}
\pr\left[\text{there exists a success}\right]
&\ge
\left(1-\sqrt{\eps}\right)
\pr\left[\bfirst\neq\mathrm{undef}\right] \\
&=\left(1-\sqrt{\eps}\right)\pr[\text{there exists a conditional success}]
\end{align*}
which concludes the proof.
\end{proof}

\subsection{Increasing events up to a stopping time}

The proof of Lemma~\ref{lemmarightmostconn} relies on a notion of
partial monotonicity adapted to a stopping time. We begin by
introducing this notion.

\begin{notation}
For a configuration $\omega$, an edge $e$, and $i\in\{0,1\}$,
we denote by $\omega^{(e\to i)}$ the configuration obtained from
$\omega$ by forcing the state of $e$ to be equal to $i$:
\[
\omega^{(e\to i)}(e')
=
\begin{cases}
i, & \text{if } e'=e,\\
\omega(e'), & \text{if } e'\neq e,
\end{cases}
\qquad \text{ for every } e'\in E.
\]
\end{notation}

\begin{definition}
\label{defcroiss}
Let $\tau$ be a stopping time, and let $X$ be a real-valued random
variable measurable with respect to $\sigma(\pi)$.
We say that $X$ is \emph{increasing up to $\tau$} if, for every
configuration $\omega\in\{0,1\}^E$ and every edge
$e\in \mathcal{H}_\tau(\omega)$,
\[
X(\omega^{(e\to0)})
\le
X(\omega^{(e\to1)}).
\]

If $A$ is an event measurable with respect to $\sigma(\pi)$, we say
that $A$ is \emph{increasing up to $\tau$} if its indicator function
$\ind_A$ is increasing up to $\tau$.
\end{definition}

The following elementary observation will be used below.

\begin{remark}
\label{remcroiss1}
For every configuration $\omega$ and every edge $e$, whether $e$ is explored by time $\tau$ does not depend on the state of $e$. More
precisely,
\[
e\in \mathcal{H}_\tau(\omega^{(e\to0)})
\quad\Longleftrightarrow\quad
e\in \mathcal{H}_\tau(\omega^{(e\to1)}).
\]
\end{remark}

\begin{remark}
\label{remcroiss2}
This notion should not be confused with the monotonicity of the
conditional expectation. A random variable may be increasing up to $\tau$ in the sense of
Definition~\ref{defcroiss} while
$\ex[X\mid\mathcal{F}_\tau]$
is not increasing as a function of the configuration.
\end{remark}

The following proposition is a partial version of the classical FKG
inequality adapted to this notion of monotonicity.

\begin{proposition}[Partial FKG inequality]
\label{PropFKG}
Let $\tau$ be a stopping time. Let $X$ and $Y$ be real-valued random variables measurable with respect to $\sigma(\pi)$ and increasing up to $\tau$.
Then
\begin{equation}
\label{EFKG1}
\ex\left[
X\,\ex\left[Y\mid\mathcal{F}_\tau\right]
\right]
\ge
\ex[X]\ex[Y].
\end{equation}
\end{proposition}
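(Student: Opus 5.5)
The plan is to run a discrete-time martingale argument along the exploration and show that the product of the two Doob martingales is a submartingale up to $\tau$. First, since $\ex[Y\mid\mathcal F_\tau]$ is $\mathcal F_\tau$-measurable, the tower property gives
\[
\ex\bigl[X\,\ex[Y\mid\mathcal F_\tau]\bigr]=\ex\bigl[\ex[X\mid\mathcal F_\tau]\,\ex[Y\mid\mathcal F_\tau]\bigr].
\]
For $0\le t\le |E|$, set $M_t:=\ex[X\mid\mathcal F_{t\wedge\tau}]$ and $N_t:=\ex[Y\mid\mathcal F_{t\wedge\tau}]$. Since the filtration is constant after time $|E|$, we have $M_{|E|}=\ex[X\mid\mathcal F_\tau]$ and $N_{|E|}=\ex[Y\mid\mathcal F_\tau]$. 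Since $\mathcal F_0$ is trivial, $M_0N_0=\ex[X]\ex[Y]$. It therefore suffices to prove
\[
\ex[M_{t+1}N_{t+1}\mid\mathcal F_t]\ge M_tN_t\qquad\text{for every } t,
\]
and then telescope.

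Next I will describe the one-step structure concretely. The atoms of $\mathcal F_t$ are the possible exploration histories up to time $t$, that is, finite sequences of explored edges together with their revealed states. On an atom where the exploration has not terminated, the next edge $e_{t+1}$ to be explored is a deterministic function of the atom. Moreover, $\mathcal F_{t+1}$ refines that atom only by revealing $\pi_{e_{t+1}}$, which is independent of $\mathcal F_t$ and is Bernoulli$(p)$. Since $\{\tau\le t\}\in\mathcal F_t$, on this event $M_{t+1}=M_t$ and $N_{t+1}=N_t$, so there is nothing to prove. On an atom of $\{\tau>t\}$, write $a_i:=\ex[X\mid\mathcal F_t,\pi_e=i]$ and $c_i:=\ex[Y\mid\mathcal F_t,\pi_e=i]$ with $e=e_{t+1}$. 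Then $M_t=pa_1+(1-p)a_0$, $N_t=pc_1+(1-p)c_0$, and a direct computation gives
\[
\ex[M_{t+1}N_{t+1}\mid\mathcal F_t]-M_tN_t=p(1-p)(a_1-a_0)(c_1-c_0).
\]
So everything reduces to showing $a_1\ge a_0$, and likewise $c_1\ge c_0$.

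This monotonicity is the key step and the main point of care. Fix an atom $F$ of $\{\tau>t\}$ in $\mathcal F_t$. The map $\omega\mapsto\omega^{(e\to1)}$ is a bijection from $F\cap\{\pi_e=0\}$ onto $F\cap\{\pi_e=1\}$. This holds because $e$ has not been explored by time $t$, so membership in $F$ does not depend on $\pi_e$. The map also preserves the product law of the remaining edges, hence
\[
a_1-a_0=\ex\bigl[X(\pi^{(e\to1)})-X(\pi^{(e\to0)})\bigm| F\bigr].
\]
For every $\omega\in F$, the edge $e$ is explored at time $t+1\le\tau(\omega)$, so $e\in\mathcal H_{t+1}(\omega)\subseteq\mathcal H_\tau(\omega)$; this is consistent with Remark~\ref{remcroiss1}. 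Since $X$ is increasing up to $\tau$, the integrand is nonnegative, and so $a_1\ge a_0$. The same argument gives $c_1\ge c_0$.

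The remaining effort goes into formalizing the atoms of $\mathcal F_t$ and checking that $\mathcal F_{t\wedge\tau}$ behaves as stated, namely that $M$ and $N$ are genuinely stopped martingales. Once this is in place, summing the nonnegative increments from $t=0$ to $|E|-1$ yields \eqref{EFKG1}.
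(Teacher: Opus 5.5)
Your proposal is correct and follows essentially the same route as the paper. The paper also proves by induction that $\ex\bigl[\ex[X\mid\mathcal F_{t\wedge\tau}]\,\ex[Y\mid\mathcal F_{t\wedge\tau}]\bigr]$ is nondecreasing in $t$, using the same one-step identity $p(1-p)(X_\eta^1-X_\eta^0)(Y_\eta^1-Y_\eta^0)\ge 0$, obtains the monotonicity by flipping the next explored edge $e\in\mathcal H_{(t+1)\wedge\tau}\subseteq\mathcal H_\tau$, and converts via the tower property. The one case you leave implicit is atoms with $\tau>t$ on which the exploration has already terminated; it is trivial, because there $\mathcal F_{t+1}$ does not refine the atom and the increment vanishes, as the paper notes explicitly.
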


\begin{remark}
\label{rem:FKG1}
Proposition~\ref{PropFKG} admits the equivalent formulation obtained by replacing the left-hand side of~\eqref{EFKG1} with
\[
\ex\Bigl[
\ex[X\mid\mathcal F_\tau]\,
\ex[Y\mid\mathcal F_\tau]
\Bigr].
\]
Indeed, since $\ex[Y\mid\mathcal F_\tau]$ is
$\mathcal F_\tau$-measurable,
\[
\ex\Bigl[
X\,\ex[Y\mid\mathcal F_\tau]
\Bigr]
=
\ex\Bigl[
\ex\Bigl[
X\,\ex[Y\mid\mathcal F_\tau]
\,\Big|\,
\mathcal F_\tau
\Bigr]
\Bigr]
=
\ex\Bigl[
\ex[X\mid\mathcal F_\tau]\,
\ex[Y\mid\mathcal F_\tau]
\Bigr].
\]
We use the formulation~\eqref{EFKG1} because it is precisely the one
needed in the proof of Lemma~\ref{lemmarightmostconn}.
\end{remark}

\begin{remark}
Despite its equivalent formulation, Proposition~\ref{PropFKG}
cannot be obtained by applying the standard FKG inequality directly to
the conditional expectations. Indeed, the standard FKG inequality
cannot be applied directly to
$\ex[X\mid\mathcal{F}_\tau]$
and
$\ex[Y\mid\mathcal{F}_\tau]$,
since these random variables are not increasing in general (see
Remark~\ref{remcroiss2}).
\end{remark}

The proof proceeds by induction and closely follows the classical
proof of the FKG inequality. It is deferred to
Appendix~\ref{app:proofFKG}.

\subsection{Main geometric lemma}

The proof of Lemma~\ref{lemmarightmostconn} relies on the following
geometric property. The proof below is the only point where planarity
is used.

\begin{lemma}
\label{lemcrois}
For every $b\in\bo$, the event $\{\blast=b\}$ is increasing up to
$\tau_b$.
\end{lemma}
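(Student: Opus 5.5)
The plan is to unwind Definition~\ref{defcroiss} directly. Fix $b\in\bo$, a configuration $\omega$ and an edge $e\in\mathcal H_{\tau_b}(\omega)$. By Remark~\ref{remcroiss1} this membership does not depend on the state of $e$. Write $\omega_0=\omega^{(e\to0)}$ and $\omega_1=\omega^{(e\to1)}$. The only nontrivial case is $\blast(\omega_0)=b$, and we must show $\blast(\omega_1)=b$.

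Since connectivity is increasing, $b$ is still a connection in $\omega_1$. It therefore suffices to show that no boundary vertex $b'$ strictly to the right of $b$ becomes connected to $\ori$ when $e$ is opened. Let $C$ be the open cluster of $\ori$ in $\omega_0$. If both endpoints of $e$ lie in $C$, or neither does, then opening $e$ does not change the cluster of $\ori$ and we are done. So assume $e=\{u,w\}$ with $u\in C$ and $w\notin C$.

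Suppose for contradiction that $b'$, strictly right of $b$, satisfies $\ori\conn b'$ in $\omega_1$. Any $\omega_1$-open path from $\ori$ to $b'$ must use $e$. Taking the portion after its last use of $e$, we get an $\omega_0$-open path $\sigma$ from $w$ to $b'$ avoiding $e$, and hence avoiding $C$ (otherwise $b'\in C$).

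Now let $\gamma$ be the path from $\ori$ to $b$ in the DFS exploration tree. It consists of $\omega_0$-open edges, so $\gamma\subseteq C$. Moreover, the exploration up to time $\tau_b$ is the same under $\omega$ and $\omega_0$, because the $\mathcal F$-information up to the exploration of $e$ agrees and, in $\omega_0$, $e$ is closed as in the relevant branch. Consider the Jordan curve formed by $\gamma$ together with the clockwise outer-face arc running from the chosen occurrence of $\ori$ to the first occurrence of $b$. Let $R$ denote the closed region it bounds, which lies on the left side of $\gamma$. Two facts give the contradiction.

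\begin{itemize}
\item[(i)] The vertex $b'$ lies outside $R$, or more precisely on the complementary outer arc. This holds because $b'$ comes after $b$ in the boundary order and $b'\notin\gamma$.
\item[(ii)] Every edge explored before $\tau_b$ that is not in $\gamma$ lies in $R$. In particular $w\in R$.
\end{itemize}

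Granting these, the path $\sigma$ runs from $w\in R$ to $b'\notin R$. By planarity it must meet the boundary of $R$. It cannot exit through the outer arc into the outer face, since edges do not enter the outer face. Hence it meets a vertex of $\gamma\subseteq C$, contradicting $\sigma\cap C=\emptyset$. If $w$ itself lies on the outer arc, it is still in $R$, and the same argument applies.

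The main obstacle is the invariant (ii): that left-first DFS only explores edges to the left of the current path to the eventual discovery of $b$. I would prove it by induction on exploration time, maintaining that at each time $t\le\tau_b$ all explored edges off the current DFS stack path lie in the region bounded by the stack path and the clockwise outer arc from $\ori$. The clockwise rotation starting immediately left of the entry edge is exactly what makes each newly explored edge lie to the left of all edges later used to extend the path. The same holds for finished, backtracked subtrees, which get sealed off to the left. The edges at $\ori$ itself are taken starting from the outer-face boundary step, which anchors the region on the correct side.

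Some care is needed for vertices that appear several times on the outer walk, and for parallel edges. For repeated vertices, I would use the first occurrence of $b$ and the chosen occurrence of $\ori$ to define the arc.
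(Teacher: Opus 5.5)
Your proposal is essentially the paper's proof: the same reduction to showing that $\blast(\omega_0)=b$ forces $\blast(\omega_1)=b$, the same DFS-tree path $\gamma$ taken in $\omega_0$, and the same key fact (ii) that the history up to $\tau_b$ lies on the left of $\gamma$, which the paper also leaves unproved. Your endgame (cutting the $\omega_1$-open path after its last use of $e$, then separating $w$ from $b'$ by $\gamma\subseteq C$) is a direct form of the paper's ``planar observation'' that some open path from $\ori$ to $b'$ lies to the right of $\gamma$ and hence avoids $e$.

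Two small corrections. First, when $\omega(e)=1$ the explorations under $\omega$ and $\omega_0$ do not coincide up to $\tau_b$, since under $\omega_1$ the search crosses $e$ and explores the cluster of $w$; all you need is $e\in\mathcal H_{\tau_b}(\omega_0)$, which is exactly Remark~\ref{remcroiss1}. Second, $\gamma\cup\beta_b$ is in general not a Jordan curve, because $\gamma$ often starts along $\beta_b$ and $\beta_b$ may pass through a cut vertex twice, so $R$ should be replaced by the left side of $\gamma$ in the paper's winding-number sense.
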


\paragraph{Left and right of an oriented path.}
For each $b\in\bo$, let $\beta_b$ denote the portion of the clockwise
outer-face boundary walk from the fixed occurrence of $\ori$ to the
occurrence of $b$ retained in the definition of the boundary order.
Let $\gamma$ be a simple path from $\ori$ to $b$, oriented from $\ori$
to $b$. Concatenating $\gamma$ with $\beta_b$ traversed in the reverse
direction gives an oriented closed walk
$C_\gamma=\gamma\cdot\beta_b^{-1}$, which need not be simple.

A face is called \emph{interior} if the winding number of
$C_\gamma$ around any point of that face is non-zero. An edge lies to
the left of $\gamma$ if it is traversed by $C_\gamma$ or is incident
to an interior face. It lies strictly to the left of $\gamma$ if it
lies to the left of $\gamma$ but does not belong to $\gamma$. An edge
lies to the right of $\gamma$ if it does not lie strictly to its left.
Thus, every edge lies to the left or to the right of $\gamma$, and the
two sides overlap precisely on the edges of $\gamma$. A path is said
to lie to the left or to the right of $\gamma$ if each of its edges
does. These conventions are illustrated in
Figure~\ref{fig:left_right}.

\begin{figure}[H]
\centering
\includegraphics[ trim=4.4cm 5cm 4.8cm 3cm, clip, width=1\textwidth]{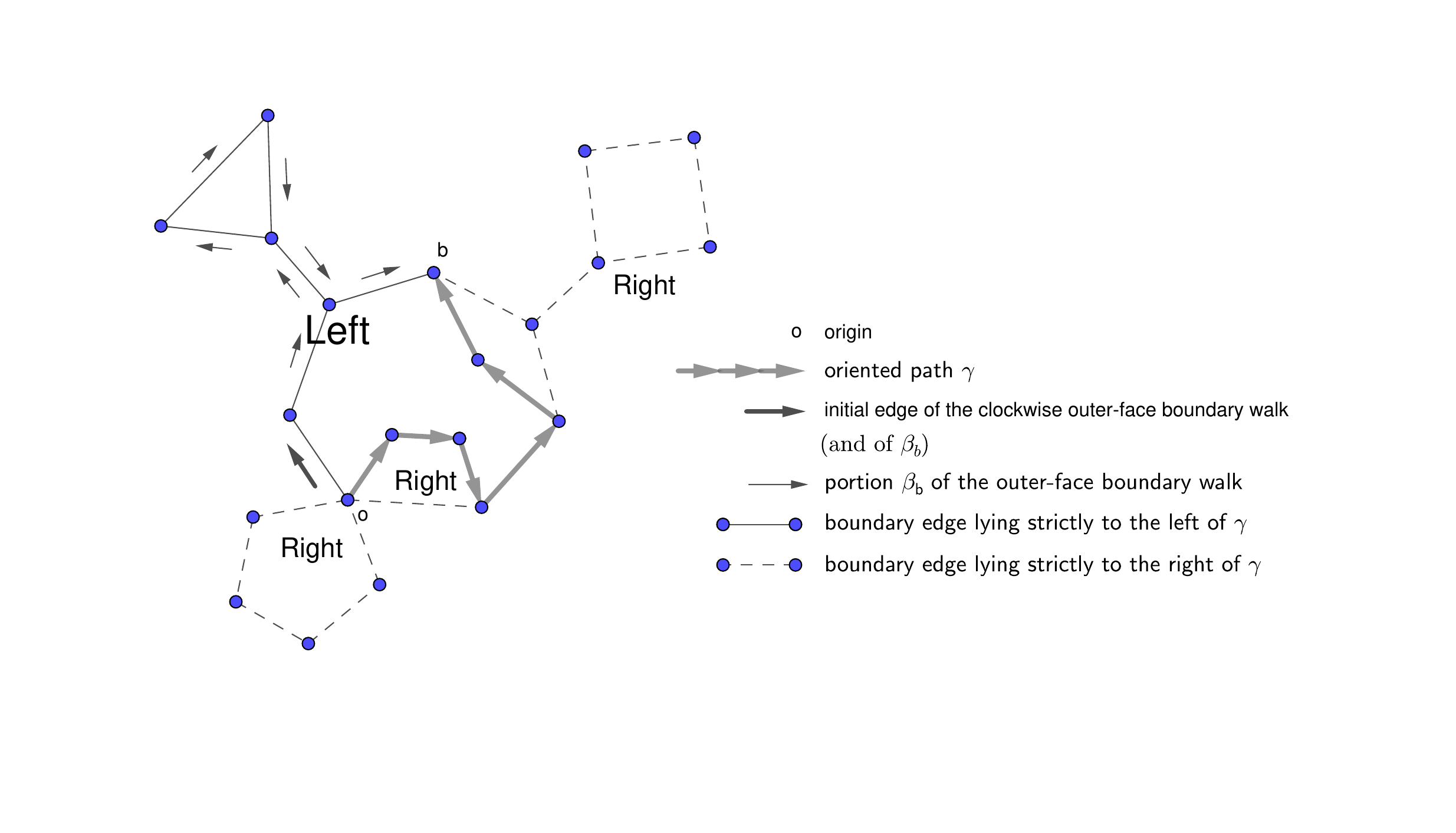}
\caption{
Illustration of the left- and right-hand sides of an oriented path
$\gamma$ from the origin $\ori$ to a boundary vertex $b$. Only the
edges on the boundary of the outer face are shown, together with
$\gamma$; all remaining edges of the graph are omitted. The arrows
along the boundary represent the clockwise boundary walk $\beta_b$
from the fixed occurrence of $\ori$ to the retained occurrence of
$b$. Recall that the closed walk
$C_\gamma=\gamma\cdot\beta_b^{-1}$ traverses $\beta_b$ in the opposite
direction.
}
\label{fig:left_right}
\end{figure}

\medskip
Figure~\ref{fig:historyset} illustrates the key geometric observation underlying the
proof: for every $b\in\bo$, on the event $\{\ori\conn b\}$, every edge
explored by time $\tau_b$ lies to the left of the DFS-tree path from
$\ori$ to $b$. We omit the proof of this property, which follows from
the local ordering defining the left-first DFS together with planarity.

We shall also use the following standard consequence of planarity. Let
$\gamma$ be an open simple path from $\ori$ to $b$. If a boundary
vertex $b'$ lying to the right of $b$ is connected to $\ori$, then
there exists an open path $\gamma'$ from $\ori$ to $b'$ lying to the
right of $\gamma$.

\begin{figure}[H]
\centering
\includegraphics[trim=2cm 1.5cm 1cm 1.9cm, clip,width=1\textwidth]{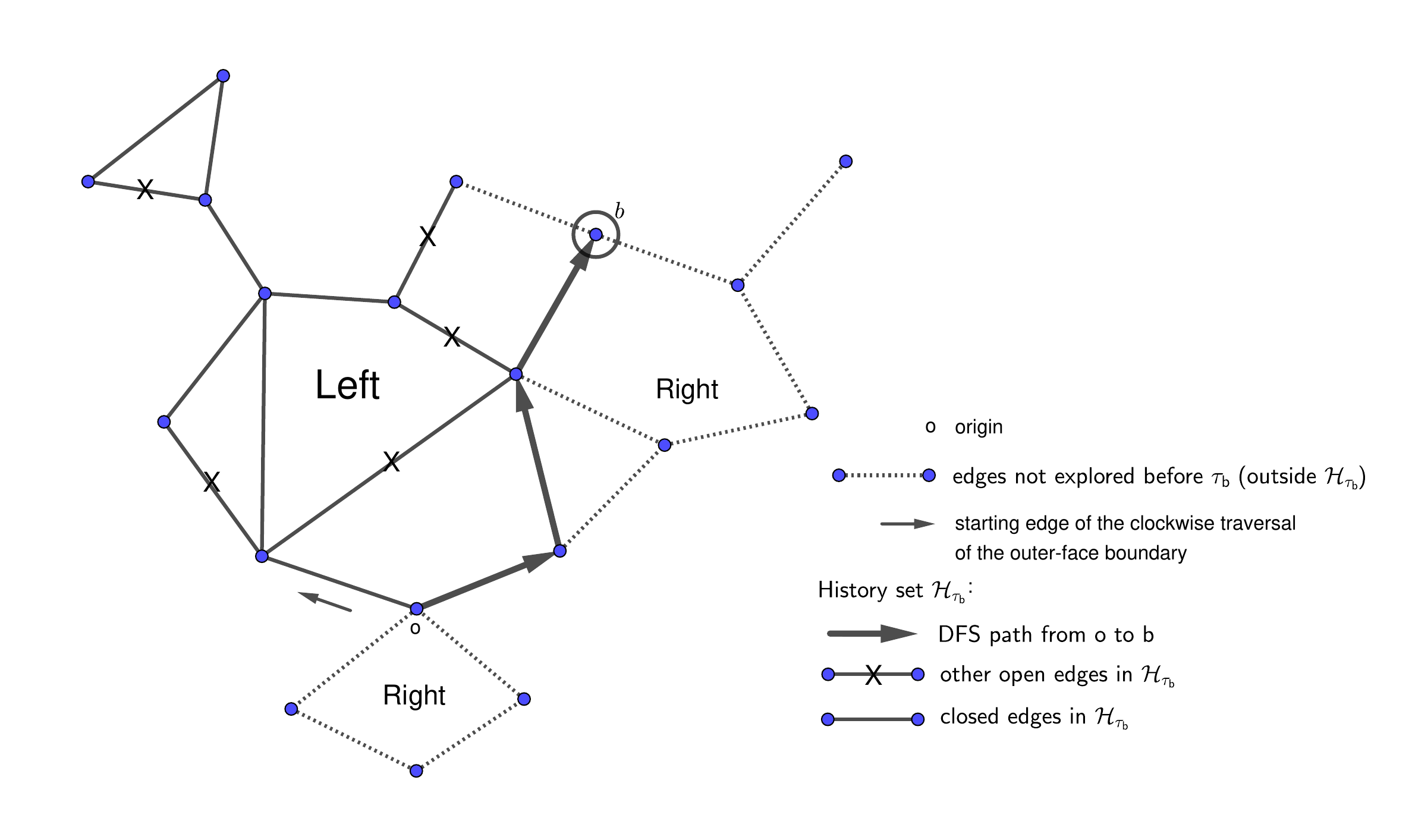}
\caption{Illustration of the key geometric property used in the proof of Lemma~\ref{lemcrois}: on the event $\{\ori\conn b\}$, every edge in the history set $\mathcal{H}_{\tau_b}$ lies to the left of the DFS path from $\ori$ to $b$.}
\label{fig:historyset}
\end{figure}

\begin{proof}[Proof of Lemma \ref{lemcrois}]
Fix $b\in\bo$, let $\omega\in\{0,1\}^E$, and let
$e\in\mathcal H_{\tau_b}(\omega)$. By
Remark~\ref{remcroiss1}, $e\in
\mathcal H_{\tau_b}\bigl(\omega^{(e\to0)}\bigr)$.
It therefore suffices to prove that
\[
\ind_{\{\blast=b\}}\bigl(\omega^{(e\to0)}\bigr)
\le
\ind_{\{\blast=b\}}\bigl(\omega^{(e\to1)}\bigr).
\]

Assume for contradiction that $\omega^{(e\to0)}$ satisfies
$\{\blast=b\}$, whereas $\omega^{(e\to1)}$ does not. Since $b$ is
connected to $\ori$ in $\omega^{(e\to0)}$, let $\gamma$ denote the
unique path from $\ori$ to $b$ in the corresponding DFS exploration
tree, oriented from $\ori$ to $b$.

By the geometric observation above, the edge $e$ lies to the left of
$\gamma$. Since $e$ is closed in $\omega^{(e\to0)}$, it cannot belong
to $\gamma$. Consequently, $e$ lies strictly to the left of $\gamma$.

The path $\gamma$ remains open in $\omega^{(e\to1)}$, and hence $b$
remains connected to $\ori$ in this configuration. Since
$\omega^{(e\to1)}$ does not satisfy $\{\blast=b\}$, there exists a
boundary vertex $b'$ lying strictly to the right of $b$ which is connected to
$\ori$ in $\omega^{(e\to1)}$.

By the planar observation above, there exists an open path $\gamma'$
from $\ori$ to $b'$ lying to the right of $\gamma$. Since $e$ lies
strictly to the left of $\gamma$, the path $\gamma'$ cannot contain
$e$. Therefore, $\gamma'$ is also open in $\omega^{(e\to0)}$. Thus
$b'$ is connected to $\ori$ in $\omega^{(e\to0)}$, contradicting the
assumption that $\blast=b$ in this configuration.

This proves the desired monotonicity.
\end{proof}

\subsection{Proof of Lemma~\ref{lemmarightmostconn}}

\begin{proof}
Fix $b \in \bo$. Since $E_b$ is increasing with respect to the percolation configuration, the function $\omega\longmapsto\pr[E_b\mid\pi=\omega]$ is increasing on $\{0,1\}^E$, and therefore increasing up to $\tau_b$. Moreover, by Lemma~\ref{lemcrois},
$\ind_{\{\blast=b\}}$ is increasing up to $\tau_b$.
Applying Proposition~\ref{PropFKG} with $X=\ind_{\{\blast=b\}}$ and $Y=\pr[E_b\mid\pi]$, we obtain
\[
\ex\Bigl[
\ind_{\{\blast=b\}}\,
\ex\Bigl[
\pr[E_b\mid\pi]
\,\Big|\,
\mathcal F_{\tau_b}
\Bigr]
\Bigr]
\ge
\ex\Bigl[
\ind_{\{\blast=b\}}
\Bigr]\,
\ex\Bigl[
\pr[E_b\mid\pi]
\Bigr].
\]
Since $\mathcal F_{\tau_b}\subseteq\sigma(\pi)$, this simplifies to
\[
\ex\Bigl[
\ind_{\{\blast=b\}}
\pr[E_b\mid\mathcal F_{\tau_b}]
\Bigr]
\ge
\pr[\blast=b]\,
\pr[E_b].
\]
Let \[
S_b:=\pr[E_b\mid\mathcal F_{\tau_b}].
\]
Since $\pr[E_b]\ge1-\varepsilon$, we obtain
\[
\ex\Bigl[
\ind_{\{\blast=b\}}S_b
\Bigr]
\ge
(1-\varepsilon)\pr[\blast=b],
\]
or equivalently,
\begin{equation}
\label{EapplyFKG2}
\ex\Bigl[
\ind_{\{\blast=b\}}(1-S_b)
\Bigr]
\le
\varepsilon\,\pr[\blast=b].
\end{equation}
Therefore,
\begin{align}
\label{EapplyFKG3}
&\pr\Bigl[
\{\blast=b\}
\cap
\{\text{$b$ is not a conditional success}\}
\Bigr]
\nonumber\\
&=
\pr\Bigl[
\{\blast=b\}
\cap
\{S_b<1-\sqrt\varepsilon\}
\Bigr]
\nonumber\\
&=
\pr\Bigl[
\ind_{\{\blast=b\}}(1-S_b)>\sqrt\varepsilon
\Bigr]
\nonumber\\
&\le
\frac1{\sqrt\varepsilon}
\ex\Bigl[
\ind_{\{\blast=b\}}(1-S_b)
\Bigr]
\qquad\text{(Markov's inequality)}
\nonumber\\
&\le
\sqrt\varepsilon\,\pr[\blast=b],
\qquad\text{by~\eqref{EapplyFKG2}}.
\end{align}
Here Markov's inequality applies since $\ind_{\{\blast=b\}}(1-S_b)\ge0$. Since the events $\{\blast=b\}$, for $b\in\bo$, are pairwise disjoint and
their union is $\{\ori\conn\bo\}$, summing~\eqref{EapplyFKG3} over all $b\in\bo$ gives
\[
\pr\!\left[
\{\ori\conn\bo\} \cap \{\blast
\text{ is not a conditional success}\}
\right]
\le
\sqrt\varepsilon\,\pr[\ori\conn\bo].
\]
Since
$\{\blast \text{ is a conditional success}\}
\subseteq \{\ori\conn\bo\}$,
the preceding inequality may be rewritten as
\[
\pr\!\left[
\text{$\blast$ is a conditional success}
\right]
\ge
(1-\sqrt\varepsilon)\pr[\ori\conn\bo],
\]
which completes the proof.
\end{proof}

%% file: Reflexions_conjecture.tex
\section{Remarks on the general conjecture}
\label{sec:remarks_general}

The purpose of this section is not to provide a proof of the general
conjecture, but rather to isolate some features of the planar argument
which may be relevant beyond the planar setting, as well as some of the
obstacles encountered when trying to extend it.

\paragraph{A heuristic in favor of the general conjecture.}

We first give a heuristic suggesting that the conjecture may remain true
in general graphs.

Recall that two vertices of a graph are said to be
\emph{2-edge-connected} if they can be joined by two edge-disjoint
paths, with the convention that a vertex is connected to itself by the
empty path. Equivalently, they are connected and remain connected after
the deletion of any single edge. This defines an equivalence relation
on the vertices, whose equivalence classes are called the
\emph{2-edge-connected components}.

Contracting each 2-edge-connected component of a connected graph
produces a tree, whose edges correspond precisely to the bridges of the
original graph. Applied to an open cluster, this decomposition is
particularly relevant for connectivity events. Indeed, on the event
that two vertices are connected, the open pivotal edges for their
connection are precisely the bridges of the open cluster whose deletion
disconnects them.

The abstract structure of this bridge tree does not distinguish planar
graphs from general graphs. Indeed, any finite tree can occur as the
bridge tree of a planar graph; for instance, one may simply take the
graph itself to be a tree. Thus, the planar theorem already allows
arbitrary tree-like arrangements of bridges, and hence arbitrary tree-like organizations of pivotal edges for connectivity events. This suggests
that the complexity of the bridge structure alone should not be an
obstruction to the general conjecture.

This heuristic does not suggest that the bridge tree alone captures all
the relevant information. In particular, any exploration-based approach
must also take into account the way in which connections are discovered
over time. This dynamic information is not determined solely by the
bridge tree of the final open cluster: the same final connectivity
structure may arise through different exploration histories. Thus, if
the decomposition into 2-edge-connected components is relevant to the
general conjecture, it should likely be combined with information
describing the dynamics of an exploration.

\paragraph{Attempts to generalize the planar exploration.}

The proof of the planar case relies on a left-first depth-first
exploration and on the geometric order induced by the planar embedding.
This order is crucial for the definition of the distinguished
connection $b^*$ and for establishing the partial growth property of
Lemma~\ref{lemcrois}, which in turn provides the monotonicity needed for
the partial FKG argument.

In a general graph, the definition of $b^*$ cannot be reproduced
directly, since there is no analogous geometric order on the boundary.
A natural possibility would be to replace it with a distinguished
connection defined solely from the exploration, while requiring this partial growth property to remain valid. We investigated this
possibility extensively, considering several exploration-based
definitions of an object playing the role of $b^*$, but none of them
led to the required partial growth property. We do not expect a direct generalization of the planar proof based solely on such a redefinition of $b^*$ to succeed.

Although this exact property seems to be lost in general graphs, a weaker structure remains. Fix an arbitrary
depth-first exploration, and define $b_\mathrm{last}$ to be the last vertex of $\bo$ discovered by the exploration, whenever such a vertex exists. We use the same notation \(\mathcal H_t\) and \(\tau_b\) for the history set and discovery time associated with this DFS.

For a configuration $\omega$, let
\[
\mathsf{Conn}(\omega)
:=
\{b'\in\bo:\ori\conn b' \text{ in }\omega\}.
\]
Fix $b\in\bo$, a configuration $\omega$, and an edge
$e\in\mathcal H_{\tau_b}(\omega)$. Compare the two configurations
$\omega^{(e\to0)}$ and $\omega^{(e\to1)}$. The following dichotomy
holds.

\begin{itemize}
    \item If $e$ belongs to no open cycle in $\omega^{(e\to1)}$, then $e$ is a bridge of the open graph in $\omega^{(e\to1)}$, and the partial
    growth property is recovered:
    \[
    \ind_{\{b_\mathrm{last}=b\}}\bigl(\omega^{(e\to0)}\bigr)
    \leq
    \ind_{\{b_\mathrm{last}=b\}}\bigl(\omega^{(e\to1)}\bigr).
    \]

    \item If $e$ belongs to an open cycle in $\omega^{(e\to1)}$, then
    deleting $e$ does not change the open cluster of the origin. In
    particular,
    \[
    \mathsf{Conn}\bigl(\omega^{(e\to0)}\bigr)
    =
    \mathsf{Conn}\bigl(\omega^{(e\to1)}\bigr).
    \]
    What may change is only the order in which these connections are
    discovered by the DFS, and therefore possibly the value of $b_\mathrm{last}$.
\end{itemize}

This dichotomy is potentially useful because it isolates the obstruction
to partial growth. In the bridge case, connectivity genuinely grows;
in the cyclic case, the set of boundary connections is unchanged and
only their order of discovery may vary. This suggests that a possible
extension of the planar argument should try to absorb these changes of
order into the induction rather than eliminate them.

One possible direction is therefore to replace the induction around a
single distinguished connection by a global recursion involving all
boundary connections simultaneously. Such a recursion would allow their
relative order to evolve during the exploration while preserving enough
monotonicity for the argument to close.

At present, however, we have not found a satisfactory way to absorb
these changes of order into a global recursive argument.

\paragraph{An adversarial formulation.}

There is another way to isolate the probabilistic mechanism behind the
conjecture. Let $(\mathcal F_t)$ denote the filtration generated by an
exploration, and, for each $b\in\bo$, consider
\[
q_b(t):=\pr[E_b\mid\mathcal F_t].
\]
Initially,
\[
q_b(0)=\pr[E_b]\geq1-\eps.
\]
For each $b$, the process $(q_b(t))_t$ is a martingale. Moreover, since
$E_b$ is increasing with respect to the percolation configuration,
revealing an edge to be open can only increase the corresponding
conditional probability relative to revealing the same edge to be
closed.

More explicitly, suppose that the next explored edge $e_{t+1}$ is
unrevealed. Denote by $q_{b,t}^{(1)}$ and $q_{b,t}^{(0)}$ the two
possible values of $q_b(t+1)$ according as $e_{t+1}$ is revealed open
or closed. Thus,
\[
q_b(t+1)=q_{b,t}^{(\pi_{e_{t+1}}))}.
\]
Since $E_b$ is increasing with respect to the percolation
configuration, $q_{b,t}^{(1)}\geq q_{b,t}^{(0)}$, while the martingale property gives
\[
q_b(t)=p\,q_{b,t}^{(1)}+(1-p)\,q_{b,t}^{(0)}.
\]
In particular,
\(
q_{b,t}^{(0)}\leq q_b(t)\leq q_{b,t}^{(1)}.
\)

This leads to the following informal adversarial interpretation.
Instead of starting from actual events $(E_b)_{b\in\bo}$, imagine an
adversary controlling the quantities $q_b(t)$ during the exploration,
subject only to the martingale and monotonicity constraints above. The
adversary attempts to make $q_b(t)$ small precisely for those boundary
vertices which eventually become relevant connections.

Every family of increasing events gives rise to such a system through
$q_b(t)=\pr[E_b\mid\mathcal F_t]$. The converse need not hold, so this
adversarial model should be regarded as a relaxation of the original
problem rather than as an equivalent formulation.

This adversarial statement is not an immediate consequence of the
planar theorem as stated. However, the same proof, reformulated in terms
of the processes $(q_b(t))_t$, shows that for the left-first depth-first
exploration the adversary cannot achieve its objective under the
martingale and monotonicity constraints described above.

It seems somewhat unnatural that this phenomenon should depend
essentially on the very specific left-first depth-first exploration
used in the planar proof. The conjecture itself does not involve any
exploration process, and this suggests heuristically that the
adversary might remain unable to win for a substantially broader class
of explorations. At present, we do not have a precise statement or
argument in this direction. However, identifying a more intrinsic
mechanism explaining why the adversary cannot win, and which remains
valid beyond the particular planar exploration used here, could provide
a natural route towards the general conjecture.

%% file: Appendice.tex
\appendix
\section{Proof of Proposition~\ref{PropFKG}}
\label{app:proofFKG}

For a stopping time $\tau$, we denote by $\pi_\tau$ the partial
percolation configuration revealed by the exploration up to time
$\tau$. Thus $\pi_\tau(e)\in\{0,1,\star\}$, where $\star$ indicates
that the state of $e$ has not yet been revealed. Finally, for every configuration $\omega$, we denote by
$\omega_\tau:=\pi_\tau(\omega)$ the corresponding realization of the
exploration history up to time $\tau$.

\begin{proof}[Proof of Proposition~\ref{PropFKG}]
Let $\tau$ be a stopping time, and let $X$ and $Y$ be two random
variables measurable with respect to $\sigma(\pi)$, and increasing up to $\tau$.

We will prove by induction on $t \in \N$ that, for every $t\in\mathbb N$,
\begin{equation}
\label{EqHDR}
\ex\Bigl[
\ex[X\mid\mathcal F_{t\wedge\tau}]
\,
\ex[Y\mid\mathcal F_{t\wedge\tau}]
\Bigr]
\ge
\ex[X]\ex[Y]
\end{equation}
Assume for the moment that \eqref{EqHDR} holds for every $t$. Since the filtration is constant after time $|E|$, we have $\mathcal F_{\tau}=\mathcal F_{|E|\wedge\tau}$, and therefore
\[
\ex\Bigl[
\ex[X\mid\mathcal F_\tau]
\,
\ex[Y\mid\mathcal F_\tau]
\Bigr]
\ge
\ex[X]\ex[Y].
\]
By Remark~\ref{rem:FKG1},
\[
\ex\Bigl[
\ex[X\mid\mathcal F_\tau]
\,
\ex[Y\mid\mathcal F_\tau]
\Bigr]
=
\ex\Bigl[
X\,\ex[Y\mid\mathcal F_\tau]
\Bigr],
\]
which yields the desired conclusion. 

The case $t=0$ is immediate since
$\mathcal F_0$ is the trivial $\sigma$-algebra. Now fix $t\in\mathbb N$ and assume that \eqref{EqHDR} holds at time $t$. It is enough to prove that, for every possible realization $\eta$ of $\pi_{t\wedge\tau}$ (that is, every $\eta$ such that $\pr [\pi_{t\wedge\tau}=\eta]>0$), we have
\begin{equation}
\label{E:proofFKG0}
\ex\Bigl[
\ex[X\mid\mathcal F_{(t+1)\wedge\tau}]
\,
\ex[Y\mid\mathcal F_{(t+1)\wedge\tau}]
\Bigm|
\pi_{t\wedge\tau}=\eta
\Bigr]
\ge
\ex\Bigl[
\ex[X\mid\mathcal F_{t\wedge\tau}]
\,
\ex[Y\mid\mathcal F_{t\wedge\tau}]
\Bigm|
\pi_{t\wedge\tau}=\eta
\Bigr].
\end{equation}
Indeed, integrating over $\eta$ and using the induction hypothesis
immediately yields \eqref{EqHDR} at time $t+1$.

Fix such a realization $\eta$. If, on the event $\{\pi_{t\wedge\tau}=\eta\}$, either $\tau\le t$ or
the cluster of $\ori$ has already been completely explored by time
$t\wedge\tau$, then no new edge is revealed between times
$t\wedge\tau$ and $(t+1)\wedge\tau$. Hence $\pi_{(t+1)\wedge\tau}=\eta$, and therefore
\begin{equation*}
\ex[X\mid\pi_{(t+1)\wedge\tau}=\eta]
=
\ex[X\mid\pi_{t\wedge\tau}=\eta],
\qquad
\ex[Y\mid\pi_{(t+1)\wedge\tau}=\eta]
=
\ex[Y\mid\pi_{t\wedge\tau}=\eta].
\end{equation*}
Thus \eqref{E:proofFKG0} holds with equality.

Assume now that this is not the case. Then, on the event $\{\pi_{t\wedge\tau}=\eta\}$, $\tau>t$, and the cluster of $\ori$ has not yet been completely explored by time $t$. Let $e$ denote the (deterministic) edge explored at time $t+1$ on the event $\{\pi_{t\wedge\tau}=\eta\}$. Let $\omega$ be such that $\omega_{t\wedge\tau}=\eta$. Then
\[
e\in \mathcal{H}_{(t+1)\wedge\tau}(\omega)
\subseteq \mathcal{H}_\tau(\omega).
\]
Since both $X$ and $Y$ are increasing up to $\tau$, we have
\[
X\bigl(\omega^{(e\to0)}\bigr)
\le
X\bigl(\omega^{(e\to1)}\bigr),
\qquad
Y\bigl(\omega^{(e\to0)}\bigr)
\le
Y\bigl(\omega^{(e\to1)}\bigr).
\]
Averaging these inequalities conditionally on $\{\pi_{t\wedge\tau}=\eta\}$ gives
\[
\ex\!\left[
X\bigl(\pi^{(e\to0)}\bigr)
\,\middle|\,
\pi_{t\wedge\tau}=\eta
\right]
\le
\ex\!\left[
X\bigl(\pi^{(e\to1)}\bigr)
\,\middle|\,
\pi_{t\wedge\tau}=\eta
\right],
\]
and similarly for $Y$. Since $e\notin\mathcal H_{t\wedge\tau}$ on the event
$\{\pi_{t\wedge\tau}=\eta\}$, the product structure implies that, for $i\in\{0,1\}$,
\[
\ex\!\left[
X\bigl(\pi^{(e\to i)}\bigr)
\,\middle|\,
\pi_{t\wedge\tau}=\eta
\right]
=
\ex\!\left[
X
\,\middle|\,
\pi_{t\wedge\tau}=\eta,\ \pi_e=i
\right].
\]
Therefore,
\[
\ex[X\mid\pi_{t\wedge\tau}=\eta,\pi_e=0]
\le
\ex[X\mid\pi_{t\wedge\tau}=\eta,\pi_e=1],
\]
and similarly for $Y$. Introduce the notation
\[
X_\eta^i
=
\ex[X\mid\pi_{t\wedge\tau}=\eta,\pi_e=i],
\qquad
Y_\eta^i
=
\ex[Y\mid\pi_{t\wedge\tau}=\eta,\pi_e=i],
\]
for $i=0,1$. Then
\begin{equation}
\label{E:proofFKG2bis}
X_\eta^0\le X_\eta^1,
\qquad
Y_\eta^0\le Y_\eta^1.
\end{equation}

Since the edge $e$ is determined by $\eta$ but has not yet been
explored, independence implies that its conditional state given
$\{\pi_{t\wedge\tau}=\eta\}$ is Bernoulli with parameter $p$.

Thus, on the one hand,
\begin{align}
\label{E:proofFKG3}
&\ex\Bigl[
\ex[X\mid\mathcal F_{(t+1)\wedge\tau}]
\,
\ex[Y\mid\mathcal F_{(t+1)\wedge\tau}]
\Bigm|
\pi_{t\wedge\tau}=\eta
\Bigr]
\nonumber\\
&=(1-p)
\ex\Bigl[
\ex[X\mid\mathcal F_{(t+1)\wedge\tau}]
\,
\ex[Y\mid\mathcal F_{(t+1)\wedge\tau}]
\Bigm|
\pi_{t\wedge\tau}=\eta,\pi_e=0
\Bigr]
\nonumber\\
&\qquad
+p
\ex\Bigl[
\ex[X\mid\mathcal F_{(t+1)\wedge\tau}]
\,
\ex[Y\mid\mathcal F_{(t+1)\wedge\tau}]
\Bigm|
\pi_{t\wedge\tau}=\eta,\pi_e=1
\Bigr]
\nonumber\\
&=(1-p)X_\eta^0Y_\eta^0+pX_\eta^1Y_\eta^1.
\end{align}
On the other hand,
\begin{align}
\label{E:proofFKG4}
&\ex\Bigl[
\ex[X\mid\mathcal F_{t\wedge\tau}]
\,
\ex[Y\mid\mathcal F_{t\wedge\tau}]
\Bigm|
\pi_{t\wedge\tau}=\eta
\Bigr]
\nonumber\\
&=
\ex[X\mid\pi_{t\wedge\tau}=\eta]\,
\ex[Y\mid\pi_{t\wedge\tau}=\eta]
\nonumber\\
&=
\Bigl[(1-p)X_\eta^0+pX_\eta^1\Bigr]
\Bigl[(1-p)Y_\eta^0+pY_\eta^1\Bigr].
\end{align}
Subtracting the right-hand sides of \eqref{E:proofFKG3} and \eqref{E:proofFKG4} gives
\begin{align*}
&(1-p)X_\eta^0Y_\eta^0+pX_\eta^1Y_\eta^1
\nonumber\\
&\qquad
-
\Bigl[(1-p)X_\eta^0+pX_\eta^1\Bigr]
\Bigl[(1-p)Y_\eta^0+pY_\eta^1\Bigr]
\nonumber\\
&=
p(1-p)
\Bigl[
X_\eta^0Y_\eta^0
+X_\eta^1Y_\eta^1
-X_\eta^0Y_\eta^1
-X_\eta^1Y_\eta^0
\Bigr]
\nonumber\\
&=
p(1-p)
\Bigl[
X_\eta^1-X_\eta^0
\Bigr]
\Bigl[
Y_\eta^1-Y_\eta^0
\Bigr]
\nonumber\\
&\ge0,
\qquad\text{by~\eqref{E:proofFKG2bis}}.
\end{align*}
Since this difference is nonnegative, comparing \eqref{E:proofFKG3} and \eqref{E:proofFKG4} yields \eqref{E:proofFKG0}. This completes the induction and the proof.
\end{proof}

%% file: Acknowledgements.tex
\paragraph*{Acknowledgements.}
The author would like to thank Raphaël Cerf for reading the manuscript
and for his valuable feedback.